\documentclass[11pt,a4]{article}
\usepackage{amssymb,amsmath,amsopn,amsthm,graphicx,makeidx}
\usepackage[all,2cell]{xy} \UseAllTwocells
\usepackage{tikz}

\addtolength{\textheight}{4.1cm}
\addtolength{\voffset}{-2.1cm}
\addtolength{\textwidth}{2cm}
\addtolength{\hoffset}{-1cm}

\newtheorem{definition}{Definition}[section]

\newtheorem{prop}[definition]{Proposition}
\newtheorem{theorem}[definition]{Theorem}
\newtheorem{cor}[definition]{Corollary}

\theoremstyle{remark}

\theoremstyle{remark}

\theoremstyle{remark}

\theoremstyle{remark}
\newtheorem{example}[definition]{Example}
\theoremstyle{remark}

\theoremstyle{remark}

\theoremstyle{remark}

\theoremstyle{remark}

\theoremstyle{remark}

\theoremstyle{definition}

\theoremstyle{definition}

\theoremstyle{remark}

\renewenvironment{proof}{\noindent {\bf{Proof.}}}{\hspace*{3mm}{$\Box$}{\vspace{9pt}}}

\begin{document}

\title{Multisorted modules and their model theory}

\author{Mike Prest\\
School of Mathematics, Alan Turing Building, \\
University of Manchester,
Manchester M13 9PL, UK \\
mprest@manchester.ac.uk}

\date{}

\maketitle

\begin{abstract}
Multisorted modules, equivalently representations of quivers, equivalently additive functors on preadditive categories, encompass a wide variety of additive structures.  In addition, every module has a natural and useful multisorted extension by imaginaries.  The  model theory of multisorted modules works just as for the usual, 1-sorted modules.  A number of examples are presented, some in considerable detail.
\end{abstract}

\tableofcontents

\section{Introduction} \label{secintro}

Multisorted modules encompass a wide variety of additive structures; we illustrate this and point out that every module, even a 1-sorted one, has a natural and useful multisorted extension.  At the same time we show how to set up and use the model theory of multisorted modules.

Throughout, the emphasis is on examples and the running theme is {\em everything works}, meaning that if something is meaningful/true for ordinary modules then ditto for multisorted modules.

Throughout $K$ will denote a commutative (1-sorted) ring with 1.  It could be a field but it doesn't have to be.

\section{Multisorted modules, quiver representations and additive functors} \label{secmodrepfun}

In this section we will see that the types of object listed in the title are really the same:  they are just different ways of presenting the same information.

\vspace{4pt}

A normal, 1-sorted, module is a set with some specified algebraic structure on that set.  The idea of a multisorted module is that it is a disjoint collection of sets with algebraic structure on each of those sets but also between those sets.  The model-theoretic way to make this precise starts by specifying a multisorted language but we'll introduce that later, and rather start with the structures themselves, introducing them as, indeed defining them to be, representations of quivers.

\vspace{4pt}

A {\bf quiver} $Q$ is a directed graph:  a set $Q_0$ of vertices and a set $Q_1$ of arrows between them, multiple directed edges and loops being allowed.  We will use the quiver $A_3$ $$1 \xrightarrow{\alpha} 2 \xrightarrow{\beta} 3$$ as a running example.

We allow quivers to have infinitely many vertices, for instance $A^\infty_\infty$: $$\dots \to \bullet \to \bullet \to \bullet \to \dots$$  We will see an example of a much larger quiver in the section on Nori motives.

A $K$-{\bf representation} $V$ of a quiver $Q$ is given by associating, to each vertex $i$, a $K$-module $V_i$, and, to each arrow $i\xrightarrow{\alpha} j$, a $K$-linear map $T_\alpha:V_i \to V_j$.

Thus, a $K$-representation of the one-vertex quiver $\bullet$ with no arrow is just a $K$-module.  A $K$-representation of the quiver $\bullet \to \bullet$ is an ordered pair of $K$-modules with a homomorphism from the first to the second.  A $K$-representation of the quiver
\begin{tikzpicture} [baseline = -1mm]
\node at (0,0) {$1$}; \node at (1.1,0) {$2$};
\draw [->, thick] (0.1,0.1) to [out=45, in=135] (0.9,0.1);
\draw [->, thick] (0.1,-0.1) to [out=-45, in =-135] (0.9,-0.1);
\node at (0.5,0.4) {$\alpha$}; \node at (0.5, -0.45) {$\beta$};
\end{tikzpicture} is given by two $K$-modules and a pair of $K$-linear maps from the first to the second.  A representation of the quiver with just one vertex and a loop is a $K$-module together with a specified endomorphism, equivalently a module over the polynomial ring $K[T]$.

A {\bf morphism} $f$ from the $K$-representation $V$ of $Q$ to the $K$-representation $V'$ of the same quiver is (given by) a set $f_i:V_i\to V'_i$ ($i\in Q_0$) of $K$-linear maps such that, for each arrow $\alpha\in Q_1$, the natural diagram (below) commutes.

$\xymatrix{V_i \ar[r]^{T_\alpha} \ar[d]_{f_i} & V_j \ar[d]^{f_j} \\
V'_i \ar[r]_{T'_\alpha} & V'_j
}$

Thus we obtain the category ${\rm Rep}_KQ$ of $K$-representations of $Q$.  Like the category of modules over any ring, this is an abelian Grothendieck category, with the additive group structure on the set $(V,V')$ of morphisms from $V$ to $V'$ being given by the natural abelian group structures on the $(V_i,V'_i)$ for $i\in Q_0$

\begin{example} Take $Q$ to be $A_3$ and suppose that $K$ is a field.  Then a $K$-representation of $A_3$ is given by three $K$-vectorspaces $V_1$, $V_2$, $V_3$ and two $K$-linear maps $T_\alpha:V_1 \to V_2$ and $T_\beta : V_2 \to V_3$.  If $f=(f_1,f_2,f_3)$ and $g=(g_1,g_2,g_3)$ are morphisms from one representation to another, then the sum of these morphisms is given by $f+g=(f_1+g_1, f_2+g_2, f_3+g_3)$.

The abelian category ${\rm Rep}_KA_3$ is particularly simple because the quiver $A_3$ is of {\bf finite representation type} meaning that there are, up to isomorphism, just finitely many indecomposable\footnote{meaning direct-sum-indecomposable: a representation is such if it is not the direct sum of two non-zero representations} representations and every representation is a direct sum of copies of these indecomposables.  It is easy, using elementary linear algebra, to identify the indecomposable representations, which are as follows; we use the notation $1_K$ to denote any isomorphism of a 1-dimensional vector space to another which thereby identifies them as copies of the same object.

\begin{tikzpicture}
\node at (0,0) {$0$}; \node at (0.6,0.6) {$0$}; \node at (1.2,1.2) {$K$};
\draw [->] (0.15,0.15) to (0.45,0.45);
\draw [->] (0.75,0.75) to (1,1);

\node at (1.5,0) {$0$}; \node at (2.1,0.6) {$K$}; \node at (2.7,1.2) {$K$};
\draw [->] (1.65,0.15) to (1.9,0.4);
\draw [->] (2.3,0.8) to (2.5,1);
\node at (2.2,1.05) {$1_K$};

\node at (3,0) {$K$}; \node at (3.6,0.6) {$K$}; \node at (4.2,1.2) {$K$};
\draw [->] (3.2,0.2) to (3.4,0.4);
\draw [->] (3.8,0.8) to (4,1);
\node at (3.7,1.05) {$1_K$};
\node at (3.15,0.5) {$1_K$};

\node at (4.5,0) {$0$}; \node at (5.1,0.6) {$K$}; \node at (5.6,1.2) {$0$};
\draw [->] (4.65,0.15) to (4.9,0.4);
\draw [->] (5.3,0.8) to (5.5,1);

\node at (6,0) {$K$}; \node at (6.6,0.6) {$K$}; \node at (7.2,1.2) {$0$};
\draw [->] (6.2,0.2) to (6.4,0.4);
\draw [->] (6.8,0.8) to (7,1);
\node at (6.15,0.5) {$1_K$};

\node at (7.5,0) {$K$}; \node at (8.1,0.6) {$0$}; \node at (8.7,1.2) {$0$};
\draw [->] (7.7,0.2) to (7.9,0.4);
\draw [->] (8.3,0.8) to (8.5,1);
\end{tikzpicture}

There is a nice way of displaying these indecomposables and the morphisms between them.  It is the {\bf Auslander-Reiten quiver} for the category of representations.  This quiver has a vertex for each finite-dimensional indecomposable representation and an arrow for each member of an independent\footnote{In the sense of being linearly independent modulo the space of morphisms which have non-trivial factorisations.} set of of irreducible morphisms between indecomposables (a morphism between indecomposables is {\bf irreducible} it it has no non-trivial factorisation).  In the case of a quiver, like $A_3$, of finite representation type, every morphism between indecomposables is a linear combination of compositions of irreducible morphisms so, in such a case, this really does give a complete picture of the category of representations.  In the diagram we have expanded each vertex to show which representation is there, at the same time simplifying our notation by giving just the dimension of the vector space at each vertex of a representation.

\begin{tikzpicture}
\node at (0,0) {$1$}; \node at (0.6,0.6) {$2$}; \node at (1.2,1.2) {$3$};
\draw [->] (0.1,0.1) to (0.45,0.45);
\draw [->] (0.7,0.7) to (1.05,1.05);
\node at (0.7,1.05) {$\beta$};
\node at (0.15,0.5) {$\alpha$};

\node at (3,0) {$0$}; \node at (3.2,0.2) {$0$}; \node at (3.4,0.4) {$1$};
\node at (4.2,1.2) {$0$}; \node at (4.4,1.4) {$1$}; \node at (4.6,1.6) {$1$};
\node at (5.4,2.4) {$1$}; \node at (5.6,2.6) {$1$}; \node at (5.8,2.8) {$1$};
\node at (5.4,0) {$0$}; \node at (5.6,0.2) {$1$}; \node at (5.8,0.4) {$0$};
\node at (6.6,1.2) {$1$}; \node at (6.8,1.4) {$1$}; \node at (7,1.6) {$0$};
\node at (7.8,0) {$1$}; \node at (8,0.2) {$0$}; \node at (8.2,0.4) {$0$};

\draw [->,thick] (3.55,0.55) to (4.05,1.05);
\draw [->,thick] (4.75,1.75) to (5.25,2.25);
\draw [->,thick] (4.6,1.1) to (5.3,0.4);
\draw [->,thick] (5.8,2.3) to (6.5,1.6);
\draw [->,thick] (5.95,0.55) to (6.45,1.05);
\draw [->,thick] (7,1.1) to (7.7,0.4);

\draw [dotted, thick] (3.6,0.2) to (5.2,0.2);
\draw [dotted, thick] (6,0.2) to (7.6,0.2);
\draw [dotted, thick] (4.8,1.4) to (6.4,1.4);
\end{tikzpicture}

For example, the leftmost two maps are the obvious inclusions of \raisebox{-0.3cm}{\begin{tikzpicture} \node at (0,0) {$0$}; \node at (0.2,0.2) {$0$}; \node at (0.4,0.4) {$1$};\end{tikzpicture}} in \raisebox{-0.3cm}{\begin{tikzpicture} \node at (0,0) {$0$}; \node at (0.2,0.2) {$1$}; \node at (0.4,0.4) {$1$};\end{tikzpicture}}  and of that in \raisebox{-0.3cm}{\begin{tikzpicture} \node at (0,0) {$1$}; \node at (0.2,0.2) {$1$}; \node at (0.4,0.4) {$1$};\end{tikzpicture}}.  The inclusion of the first in the third is not irreducible since it is the composition of the other two maps.

The dotted lines in the diagram above indicate almost split sequences (also called Auslander-Reiten sequences):  these are the ``minimal'' exact sequences in the category.  In the diagram we can see three:
$$0 \to \raisebox{-0.3cm}{\begin{tikzpicture} \node at (0,0) {$0$}; \node at (0.2,0.2) {$0$}; \node at (0.4,0.4) {$1$};\end{tikzpicture}} \to \raisebox{-0.3cm}{\begin{tikzpicture} \node at (0,0) {$0$}; \node at (0.2,0.2) {$1$}; \node at (0.4,0.4) {$1$};\end{tikzpicture}} \to \raisebox{-0.3cm}{\begin{tikzpicture} \node at (0,0) {$0$}; \node at (0.2,0.2) {$1$}; \node at (0.4,0.4) {$0$};\end{tikzpicture}} \to 0$$ $$0 \to \raisebox{-0.3cm}{\begin{tikzpicture} \node at (0,0) {$0$}; \node at (0.2,0.2) {$1$}; \node at (0.4,0.4) {$1$};\end{tikzpicture}} \to \raisebox{-0.3cm}{\begin{tikzpicture} \node at (0,0) {$1$}; \node at (0.2,0.2) {$1$}; \node at (0.4,0.4) {$1$};\end{tikzpicture}} \oplus \raisebox{-0.3cm}{\begin{tikzpicture} \node at (0,0) {$0$}; \node at (0.2,0.2) {$1$}; \node at (0.4,0.4) {$0$};\end{tikzpicture}} \to \raisebox{-0.3cm}{\begin{tikzpicture} \node at (0,0) {$1$}; \node at (0.2,0.2) {$1$}; \node at (0.4,0.4) {$0$};\end{tikzpicture}} \to 0$$ $$0 \to \raisebox{-0.3cm}{\begin{tikzpicture} \node at (0,0) {$0$}; \node at (0.2,0.2) {$1$}; \node at (0.4,0.4) {$0$};\end{tikzpicture}} \to \raisebox{-0.3cm}{\begin{tikzpicture} \node at (0,0) {$1$}; \node at (0.2,0.2) {$1$}; \node at (0.4,0.4) {$0$};\end{tikzpicture}} \to \raisebox{-0.3cm}{\begin{tikzpicture} \node at (0,0) {$1$}; \node at (0.2,0.2) {$0$}; \node at (0.4,0.4) {$0$};\end{tikzpicture}} \to 0.$$
Of course we mean essentially three - for example, there extensions of $\raisebox{-0.3cm}{\begin{tikzpicture} \node at (0,0) {$0$}; \node at (0.2,0.2) {$1$}; \node at (0.4,0.4) {$0$};\end{tikzpicture}}$ by $\raisebox{-0.3cm}{\begin{tikzpicture} \node at (0,0) {$0$}; \node at (0.2,0.2) {$0$}; \node at (0.4,0.4) {$1$};\end{tikzpicture}}$ form a vector space but it is 1-dimensional.

We can also see, for example, that the morphisms from $\begin{tikzpicture} \node at (0,0) {$0$}; \node at (0.2,0.2) {$1$}; \node at (0.4,0.4) {$1$};\end{tikzpicture}$ to $\begin{tikzpicture} \node at (0,0) {$1$}; \node at (0.2,0.2) {$1$}; \node at (0.4,0.4) {$0$};\end{tikzpicture}$ form a 1-dimensional vector space since exactness of the second sequence says that going from $\begin{tikzpicture} \node at (0,0) {$0$}; \node at (0.2,0.2) {$1$}; \node at (0.4,0.4) {$1$};\end{tikzpicture}$ to $\begin{tikzpicture} \node at (0,0) {$1$}; \node at (0.2,0.2) {$1$}; \node at (0.4,0.4) {$0$};\end{tikzpicture}$ {\it via} $\begin{tikzpicture} \node at (0,0) {$1$}; \node at (0.2,0.2) {$1$}; \node at (0.4,0.4) {$1$};\end{tikzpicture}$ and {\it via} $\begin{tikzpicture} \node at (0,0) {$0$}; \node at (0.2,0.2) {$1$}; \node at (0.4,0.4) {$0$};\end{tikzpicture}$ are equivalent (up to scalar multiple).

Each representation of $A_3$ is, in a very natural sense, a 3-sorted module.  Underlying it are 3 disjoint sets - its 3 sorts - on each of which there is an algebraic structure (that of an additive group plus each element of $K$ acting as a scalar) and there are also scalars ($\alpha$, $\beta$) which move elements between sorts.
\end{example}

Once you start noticing them, you can see representations of quivers in many places.

In particular, any module over a ring $R$ can be obtained as a representation of a quiver with a single vertex $\ast$.  To do that, choose a set $(r_\lambda)_{\lambda\in \Lambda}$ of elements of $R$ which together generate $R$ as a ring.  For each $\lambda$ add a loop $\alpha_\lambda$ at $\ast$.  Let $Q$ denote the resulting quiver.  Then, given an $R$-module $M$, form a ${\mathbb Z}$-representation of $Q$ by associating to $\ast$ the underlying abelian group $M_{\mathbb Z}$ of $M$ and by associating to $\alpha_\lambda$ the scalar multiplication of $r_\lambda$ on $M$.  (If $R$ is a $K$-algebra, choose $K$-algebra generators and replace ${\mathbb Z}$ by $K$.)

While every $R$-module may be obtained this way, in general not every ${\mathbb Z}$-representation of $Q$ will be an $R$-module since there may be (polynomial) relations between the $r_\lambda$.  This naturally leads one to consider {\bf quivers with} (or {\bf bound by}) {\bf relations} and the corresponding notion of representation of a quiver with relations (a representation where those relations all hold).  In this way one can obtain every category ${\rm Mod}\mbox{-}R$ of modules as the category of representations of a quiver with relations.

Indeed, if a quiver (with relations) has just finitely many vertices then one can construct from it the corresponding path algebra (over $K$) and then the category of modules over this path algebra will be isomorphic to the category of $K$-representations of the quiver (with relations).  The {\bf path algebra} is, as a module over $K$, free on the paths in the quiver, including a ``lazy path'' at each vertex, where a path is a composable sequence of arrows.  Multiplication of paths is defined to be concatenation when possible, otherwise $0$.  For instance, in the case of $A_3$ the $K$-path algebra is the ring of upper (or lower, depending on one's convention for composition of arrows) $3\times 3$ matrices over $K$.

\begin{example} Let's use left modules with the convention that arrows compose from right to left, so $\beta\alpha$ is the path which is represented by a composition $T_\alpha$ then $T_\beta$.  Then the path algebra $KA_3$ is the matrix ring $\left( \begin{array}{ccc} K & 0 & 0 \\ K & K & 0 \\ K & K & K \end{array}\right)$.  A $K$-basis for $KA_3$ is given by the entries of the matrix $\left( \begin{array}{ccc} e_1 & 0 & 0 \\ \alpha & e_2 & 0 \\ \beta\alpha & \beta & e_3 \end{array}\right)$, where $e_i$ can be thought of as the lazy path at vertex $i$ (which acts by fixing everything at vertex $i$ and as $0$ on elements at other vertices).

Given a $K$-representation $(V_1, V_2, V_3, T_\alpha, T_\beta)$ of $A_3$, the corresponding $KA_3$-module has underlying $K$-module $V_1 \oplus V_2 \oplus V_3$.  To give the actions of the elements of $R$ it is enough to give those of the basis elements and these are just the obvious ones if we think of the $3\times 3$ matrices acting on column vectors $\left( \begin{array}{c} v_1 \\ v_2 \\ v_3 \end{array}\right)$ from the left.

In the other direction, from a left $KA_3$-module, we extract the $K$-representation of $A_3$ with $V_i =e_iM$ and the obvious (restrictions/corestrictions of multiplication by a ring element) actions for $T_\alpha$ and $T_\beta$.
\end{example}

In this example, we see that a category of 3-sorted modules - the category of representations of $A_3$ - is equivalent to a category of (ordinary) 1-sorted modules over a ring - the path algebra.  In one direction the equivalence lumps together the sorts (formally, as a direct sum, with definable components) and in the other it separates off, as various sorts, these definable components.  If there are just finitely many sorts we can do this but, once we have a quiver with infinitely many vertices, the corresponding path algebra will not have a global 1 and, arguably, viewing the representations as multi-sorted modules is more natural than trying to see them as modules over a ring without 1.

\vspace{4pt}

Now we show how additive functors on skeletally small preadditive categories are really just multisorted modules, equivalently are representations of quivers.  A category is ({\bf skeletally}) {\bf small}\footnote{Sometimes we will blur the distinction between small and skeletally small categories, since it makes no difference to the representation theory of a category.} if it has (up to isomorphism) just a set of objects and is {\bf preadditive} if it is enriched in ${\mathbb Z}$-modules, that is if each hom set has the structure of an abelian group and composition is bilinear.  (More generally we can consider categories enriched over $K$-modules.)

First we have to extend the concept of a ring to allow rings with many objects (equivalently multi-sorted rings).  Let us being looking in the other direction, by restricting the concept of a preadditive category to that of a ring.

So suppose that we have a preadditive category with just one object, $\ast$ say.  Then the endomorphisms of this object form a ring:  the preadditive structure gives the additive abelian group structure on the endomorphisms and composition of endomorphisms is the multiplication.  Conversely any ring (with 1) can be turned into such a one-object preadditive category in an obvious reversal of this process.  Thus general preadditive categories can be viewed as rings with more than one object (but, if skeletally small then, up to isomorphism, just a set of objects).  A natural example of a skeletally small (but not small) preadditive category is the category $R\mbox{-}{\rm mod}$ of finitely presented modules (that is, modules which are finitely generated and finitely related) over any ring.

If $R$ is a ring, regarded as a preadditive category with unique object $\ast_R$, then an additive functor $M$ from that category to ${\bf Ab}$ is given by an abelian group, $M(\ast_R)$, together with, for each $r\in R$, an endomorphism of that abelian group.  The additive functoriality conditions translate exactly to the condition that $M(\ast_R)$, equipped with these actions of elements of $R$, is a left $R$-module.  And conversely, every left $R$-module gives such a functor.  Similarly, natural transformations between such additive functors are exactly the $R$-homomorphisms between the corresponding modules.  That is, the category $R\mbox{-}{\rm Mod}$, of left $R$-modules, is isomorphic to the category $(R, {\bf Ab})$ of additive functors from $R$, regarded as a preadditive category, to ${\bf Ab}$.  Therefore, if ${\mathcal R}$ is any skeletally small preadditive category, then we refer to additive functors from ${\mathcal R}$ to ${\bf Ab}$ as left ${\mathcal R}$-modules, writing ${\mathcal R}\mbox{-}{\rm Mod}$ for the category they form.

For instance, if ${\mathcal R} = R\mbox{-}{\rm mod}$ is the category of finitely presented modules over some ring (or preadditive category) $R$ then the functor category $(R\mbox{-}{\rm mod}, {\bf Ab})$ may be regarded as the category of $R\mbox{-}{\rm mod}$-modules.  Thus the model theory of multisorted modules includes the model theory of additive functors on skeletally small preadditive categories.

Right $R$-modules are contravariant functors from $R$, regarded as a 1-object category, to ${\bf Ab}$, equivalently as left modules over the opposite ring $R^{\rm op}$, and we extend the terminology and notation to multisorted modules in the obvious ways.

\begin{example}  What is a finitely presented multisorted module?  A module $M$ over a 1-sorted ring $R$ is finitely presented if it has a projective presentation of the form $_RR^m \to _RR^n \to M \to 0$:  an exact sequence where $_RR$ denotes the ring regarded as a left module over itself.  That module is the image of the object $\ast_R$ under the (contravariant) Yoneda embedding of $R$, regarded as above as a 1-object category, to $R\mbox{-}{\rm Mod}$, that is, it is the representable functor $(\ast_R,-)$.  In the general case there are many objects in ${\mathcal R}$, so the finitely generated free modules are replaced by any finite direct sum of the form $(p_i,-)$ with the $p_i$ objects of ${\mathcal R}$, giving the definition of finitely presented ${\mathcal R}$-module.

Equivalently, an ${\mathcal R}$-module $M$ is {\bf finitely presented} if there is an exact sequence $Q \to P \to M \to 0$ where $P$ and $Q$ are finitely generated projective ${\mathcal R}$-modules (these being the direct summands of direct sums of representable functors $(p,-)$).
\end{example}

\vspace{4pt}

Finally in this section, we illustrate how to go directly from representations of quivers to functors on preadditive categories.

\begin{example} The quiver $A_3$ is not a category but there is an obvious category, namely the free one, that we can build from it, as from any quiver $Q$.  The objects of that category, which we denote $\overrightarrow{Q}$, are the vertices of $Q$ and the morphisms from $i$ to $j$ are the {\bf path}s from $i$ to $j$ (compatible concatenations of arrows where the head of one is at the tail of the next, working from right to left), including a ``lazy path'' at each vertex $i$ (the identity at $i$).  Composition of morphisms is given by concatenation where compatible and 0 where not.

In the case of $A_3$ this adds the identity map at each vertex and the path $\beta\alpha$ from $1$ to $3$.

The free preadditive category ${\mathbb Z}\overrightarrow{Q}$ on $Q$ is obtained by replacing each set $\overrightarrow{Q}(i,j)$ of morphisms by the free abelian group on that set, and extending the composition of morphisms by requiring that it be bilinear.

In our example, we see that ${\mathbb Z}\overrightarrow{A_3}$ is such that $(i,j)$ is ${\mathbb Z}$ if $i\leq j$ and is $0$ otherwise.

Of course we can replace ${\mathbb Z}$ by any commutative ring $K$ and then we have the free $K\mbox{-}{\rm Mod}$-enriched preadditive category $K\overrightarrow{Q}$ on $Q$.  Note that this is not identical to the $K$-path algebra $KQ$:  the latter, as a preadditive category, has just one object, whereas $K\overrightarrow{A_3}$ has three.  But they are almost the same as categories, in that they have the same idempotent-splitting additive completion.  The free additive completion of $KQ$ is obtained by formally adding finite direct sums of copies of the objects and the obvious morphisms; then the idempotent-splitting completion adds kernels and cokernels of idempotent maps to that. (Considering the Yoneda embedding which takes each object to the corresponding representable functor, that is left module, one sees that the additive completion is the opposite of the category of finitely generated free $KQ$-modules and the idempotent-splitting completion of that is the opposite of the category of finitely generated projective $KQ$-modules.)  One can easily see that starting with $K\overrightarrow{Q}$ leads to the same result.  In particular, the categories have the same representation theory:  $KQ\mbox{-}{\rm Mod} \simeq K\overrightarrow{Q}\mbox{-}{\rm Mod}$.
\end{example}

\section{Setting up linear algebra in multisorted modules}

Linear algebra over a field (more generally, over a von Neumann regular ring) is about systems of linear equations.  But over other rings, projections of solution sets of systems of linear equations are in general not solution sets of systems of linear equations, and existential quantifiers are needed to define them.  Consider, for example, the relation of divisibility over $R={\mathbb Z}$:  the condition $3|x$ is obtained from the linear equation $x-3y=0$ by projecting out the second coordinate:  $\exists y\, (x-3y=0)$.  There is no way of avoiding that quantifier so we must accept that linear algebra over general rings necessarily involves existentially quantified systems of linear equations.

Fortunately, such expresssions - existentially quantified systems of linear equations - are familiar from model theory where they are treated as mathematical objects, referred to as positive primitive (pp for short) formulas\footnote{or ``regular'' formulas in categorical model theory}.  Model theory has the tools to handle these and their solution sets.

There are many accounts of this, so here I recall only what I need.

A {\bf pp formula} is an existentially quantified system of linear equations.  A system of linear equations for left $R$-modules has the form $H\overline{x}=0$ where $H$ is a rectangular matrix with entries from $R$ and $\overline{x}$ is a (column) vector of variables.  So a pp formula has the form $\exists \overline{y} \,\, G(\overline{x} \, \overline{y})=0$ for some matrix $G$ over $R$.  We can denote such a formula by $\phi$ or, showing its free=unquantified variables, $\phi(\overline{x})$.  The {\bf solution set}, $\phi(M)$, of $\phi$ in a right $R$-module $M$ is $\{ \overline{a}\in M^n: \exists \overline{b}\in M^m \mbox{ such that } G(\overline{a}\, \overline{b})=0\}$ where $n$, $m$ are the lengths of the tuples $\overline{x}$, $\overline{y}$ respectively.  This is a {\bf pp-definable subgroup} of $M$ (more precisely, a subgroup of $M^n$ pp-definable in $M$).  The intersection and then sum of two such pp-definable subgroups are again pp-definable.

Such solution sets are preserved by morphisms.

\begin{prop}  Let $f:M\to N$ be a morphism of $R$-modules.  Then $f\phi(M) \leq \phi(N)$.

In particular every subgroup of $M^n$ pp-definable in $M$ is an ${\rm End}(M)$-submodule of $M^n$, where ${\rm End}(M)$ acts diagonally on $M^n$.
\end{prop}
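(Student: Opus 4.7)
The plan is to unwind the definitions and observe that the only facts used are $R$-linearity of $f$ (so that systems of linear equations over $R$ are preserved) and the fact that applying $f$ coordinatewise sends tuples to tuples. There is no real obstacle; the argument is essentially a one-liner once the notation is set up.

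First, fix a pp formula $\phi(\overline{x})$, which by definition has the shape $\exists \overline{y}\; G(\overline{x}\,\overline{y}) = 0$ for some matrix $G$ with entries in $R$, where $\overline{x}$ has length $n$ and $\overline{y}$ has length $m$. Take any $\overline{a} \in \phi(M)$; by definition of the solution set there exists a witness $\overline{b} \in M^m$ with $G(\overline{a}\,\overline{b}) = 0$ inside $M$. I would then let $f^{(k)} : M^k \to N^k$ denote the coordinatewise application of $f$ and verify, from the $R$-linearity of $f$ and the fact that each row of the equation $G(\overline{a}\,\overline{b})=0$ is an $R$-linear combination of entries from $\overline{a}\,\overline{b}$, that $G\bigl(f^{(n)}\overline{a}\; f^{(m)}\overline{b}\bigr) = 0$ holds in $N$. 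Thus $f^{(m)}\overline{b}$ witnesses that $f^{(n)}\overline{a} \in \phi(N)$, giving $f\phi(M) \leq \phi(N)$ as required.

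For the ``in particular'' clause, I would specialise to the case $N = M$ and let $f$ range over $\mathrm{End}(M)$. Then the diagonal action of $f$ on $M^n$ is exactly $f^{(n)}$, so the inclusion just proved reads $f\cdot \phi(M) \subseteq \phi(M)$. Combined with the fact (already noted in the excerpt) that $\phi(M)$ is a subgroup of $M^n$, this shows $\phi(M)$ is an $\mathrm{End}(M)$-submodule under the diagonal action.

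The only step that merits care is the verification that the coordinatewise image $f^{(n+m)}$ of a solution to $G\overline{z}=0$ is again a solution: this is where $R$-linearity of $f$ is used, entry by entry, on each of the finitely many linear equations comprising $G\overline{z}=0$. This is routine and I would not write out the calculation in full; it is the same computation that shows a module homomorphism sends a solution of any linear system to a solution of that system.
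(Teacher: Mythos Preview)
Your proof is correct and is exactly the standard argument; the paper itself states this proposition without proof, treating it as a well-known fact. There is nothing to compare since the paper provides no proof of its own.
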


Therefore each pp formula $\phi$ defines an additive functor from ${\rm Mod}\mbox{-}R$ to ${\bf Ab}$ (equivalently to $K\mbox{-}{\rm Mod}$ if $R$ is a $K$-algebra), which we denote by $F_\phi$, given on objects by $F_\phi(M)=\phi(M)$.

\begin{prop}  Suppose that $M = \varinjlim_\lambda \, M_\lambda$ is a direct limit (= directed colimit) of modules and let $\phi$ be a pp formula.  Then $\phi(M) = \varinjlim_\lambda \, \phi(M_\lambda)$.

In particular, each functor of the form $M\mapsto \phi(M)$ with $\phi$ pp is determined by its action on finitely presented modules.
\end{prop}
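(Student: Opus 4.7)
The plan is to exploit the previous proposition (morphisms preserve pp-definable sets) to obtain, for each structural map $\iota_\lambda : M_\lambda \to M$ of the colimit cone, an inclusion $\iota_\lambda \phi(M_\lambda) \subseteq \phi(M)$; these are compatible with the transition maps, so they assemble into a canonical comparison map
\[
\Psi : \varinjlim_\lambda \phi(M_\lambda) \longrightarrow \phi(M).
\]
The task then reduces to showing that $\Psi$ is bijective, using nothing beyond the standard description of elements and equalities in a directed colimit of abelian groups (i.e., an element comes from some stage, and an equality that holds in the colimit is witnessed at some later stage).

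For surjectivity, start with $\overline{a} \in \phi(M)$, so there is a witness $\overline{b} \in M^m$ with $G(\overline{a}\,\overline{b}) = 0$. Since $M = \varinjlim_\lambda M_\lambda$, both tuples lift: choose $\lambda$ large enough that $\overline{a}$ lifts to some $\overline{a}' \in M_\lambda^n$ and $\overline{b}$ lifts to some $\overline{b}' \in M_\lambda^m$ (using that finitely many elements of the colimit can be lifted to a common stage). The tuple $G(\overline{a}'\,\overline{b}')$ then lies in $M_\lambda$ and maps to $0$ in $M$; enlarging $\lambda$ to some $\mu$, the equation $G(\overline{a}'\,\overline{b}') = 0$ already holds in $M_\mu$. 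Hence the image of $\overline{a}'$ in $M_\mu$ lies in $\phi(M_\mu)$, and its class in the colimit maps under $\Psi$ to $\overline{a}$.

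For injectivity, suppose $\overline{a}' \in \phi(M_\lambda)$ has image zero in $\phi(M) \subseteq M^n$. Then $\iota_\lambda(\overline{a}') = 0$ in $M$, and again by the colimit description there is $\mu \geq \lambda$ such that $\overline{a}'$ is already sent to $0$ in $M_{\mu}$; but this latter zero is clearly an element of $\phi(M_\mu)$, so the class of $\overline{a}'$ in $\varinjlim_\lambda \phi(M_\lambda)$ is zero.

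The ``in particular'' clause is then immediate from the standard fact that every module is the directed colimit of its finitely presented subobjects (or, more generally, the colimit of a canonical directed system of finitely presented modules mapping to it): since $F_\phi = \varinjlim F_\phi$ on such systems, the functor is recovered from its restriction to finitely presented modules. No step is really a ``main obstacle''; the only care needed is the routine bookkeeping in the surjectivity argument to lift both $\overline{a}$ and $\overline{b}$, and then $G(\overline{a}'\,\overline{b}')$, to a common index, which is handled by the directedness of the indexing system.
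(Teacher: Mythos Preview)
Your proof is correct. The paper itself does not prove this proposition: it is stated as a known fact, with only the remark that the ``in particular'' clause follows because every module is a direct limit of finitely presented modules. Your argument supplies exactly the standard elementary proof, using that a pp formula involves only finitely many variables and finitely many linear conditions, so that both the witnesses and the vanishing of $G(\overline{a}'\,\overline{b}')$ can be arranged at a single stage of the directed system. There is nothing to compare on the level of technique, since the paper gives none; your write-up is a perfectly good justification of what the paper takes for granted.
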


The second statement follows since every $R$-module is a direct limit of finitely presented $R$-modules.  We will use this in Section \ref{secthree}.

\section{Multisorted modules as structures}

We set up the model theory of multisorted modules in the obvious way.  If we're starting with a quiver $Q$, then we introduce a sort $\sigma_i$ for each vertex $i\in Q_0$ and a function symbol $f_\alpha$ from sort $\sigma_i$ to $\sigma_j$ for each arrow $\alpha:i \to j$ in $Q$.  We want our structures to be additive, so we equip each sort with a symbol $+$ (that is, $+_i$) for the addition and a constant symbol $0$ (that is $0_i$) for the zero element of that sort.  If we want to consider $K$-representations then we should add, for each $i$ and each $\lambda \in K$, a 1-ary function symbol to express multiplication by $\lambda$ on sort $\sigma_i$.

With that language we can then write down the requirements that each sort $\sigma_i$ have the structure of a $K$-module, that each function $f_\alpha$ be $K$-linear and, if we started with a quiver with relations, that all the imposed relations are satisfied.  In this way the category of $K$-representations of $Q$ (with any relations) becomes the category of models of a set of equations in the language we just set up.  This gives us the view of multisorted modules as structures for some language, where the {\bf sorts} of a multisorted module, that is, a $K$-representation $M$ of $Q$, are the $K$-modules $M_i$ for $i\in Q_0$.

If the quiver has just one vertex then, of course, we get the usual language for modules over a $K$-algebra (that $K$-algebra being determined by the arrows of $Q$ and any imposed relations).  But this multisorted set-up allows us to encompass many more examples, some of which are described in the next, and later, sections.  The main methodological point is that the model theory of multisorted modules works exactly as does that for 1-sorted modules.  Of course, one has to say some things slightly differently because of there being more than one sort and one has to bear in mind that any single formula refers to only finitely many sorts of the structure (variables are sorted, as are the function and constant symbols).  So a pp-definable subgroup of a multisorted module will be a subgroup of a product $M_{i_1} \times \dots \times M_{i_n}$ of finitely many sorts, in particular, the whole module is not a definable set if there are infinitely many vertices in $Q$.

\vspace{4pt}

What changes would be needed if we were to start with a small preadditive category instead of a quiver, that is, if we were to interpret ``multisorted module'' as meaning a $K$-linear functor from such a category to $K\mbox{-}{\rm Mod}$, equivalently if we were to start with a module over a multisorted ring?  Noting that a category can be seen as a quiver, what is extra here is the addition in each hom-set, so we will have more axioms to write down, expressing relations between arrows of that quiver which involve addition as well as composition.  Otherwise everything is as before.

\vspace{4pt}

The construction above gives the ``covariant'' language but sometimes, see \ref{exfinacc} below, it is more natural to consider right modules = contravariant functors, in which case, for each arrow $\alpha:i \to j$ of the quiver (or category) we would introduce a function symbol from sort $\sigma_j$ to $\sigma_i$.

\vspace{4pt}

What about categories, such as $R\mbox{-}{\rm mod}$, that are skeletally small but not small?  Then we just choose some full subcategory which contains at least one copy of each object and apply the above.

\section{Examples of multisorted modules}

\begin{example}  {\bf Chain complexes of modules:}  Suppose that $R$ is any ring.  A {\bf chain complex} of $R$-modules is a sequence $(M_i)_{i\in {\mathbb Z}}$ of $R$-modules and morphisms $(d_i:M_i \to M_{i-1})_i$ such that $d_{i-1}d_i=0$ for every $i$.  The usual way of writing this is $$\dots \to M_2 \xrightarrow{d_2} M_1 \xrightarrow{d_1} M_0 \xrightarrow{d_0} M_{-1} \xrightarrow{d_{-1}} M_{-2} \to \dots,$$ which makes it clear that this is naturally an $R$-representation of the quiver $A_\infty^\infty = \dots \to \cdot \to \cdot \to \cdot \to \dots$ (the notion of $K$-representation of a quiver doesn't require the ring $K$ to be commutative).

Thus complexes are multisorted modules and the language for them has sorts indexed by ${\mathbb Z}$.  The language, as well as being able to express the $R$-module structure in each sort, has a function symbol for each $i$ to express the differential $d_i$.  The conditions $d_{i-1}d_{i}=0$ give axioms $\forall x_{i} \, d_{i-1}d_{i}x_{i} =0_{i-2}$, where subscripts on the variable and the constant symbol indicate their sorts.  Each of these axioms says that a certain quotient of pp-definable groups is $0$, in this case, the quotient $(x_i=x_i)/(d_{i-1}d_{i}x_{i} =0_{i-2})$.  These infinitely many pp conditions cut out the chain complexes as an axiomatisable (indeed, ``definable" in the sense we discuss in Section \ref{secloc}) subclass of the category of $R$-representations of $A_\infty^\infty$.

Similarly, the exact sequences are obtained as those on which all the pp-pairs $(d_ix_i=0)/(\exists x_{i+1}\, (x_i=d_{i+1}x_{i+1}))$ are ``closed'' (that is, $0$).  Such pp-pairs, that is the various homologies of such a complex, will appear as new sorts in an enriched language which we define in the next section.  An example of a subcategory that cannot be defined by only finitely many formulas (saying that certain pp-pairs are ``closed") is that consisting of the complexes concentrated at a given vertex $i$ (that, which are $0$ on every other vertex).
\end{example}

\begin{example} \label{exfinacc} {\bf Finitely accessible categories:}  A category ${\mathcal C}$ is {\bf finitely accessible} if it has direct limits, if its finitely presented objects form a set up to isomorphism and if every object of ${\mathcal C}$ is a direct limit of finitely presented objects.  An object $X\in {\mathcal C}$ is {\bf finitely presented} if the covariant representable functor $(X,-): {\mathcal C} \to {\bf Set}$ commutes with direct limits.  This does coincide with the more familiar definition - as being finitely generated and finitely related - when the latter makes sense.  The category of modules over a ring (or over a small preadditive category) is an example as is, for instance, the category of groups.  A standard reference is \cite{AdRo}.

There is a natural language for a finitely accessible (additive) category ${\mathcal C}$, based on its full, skeletally small, subcategory ${\mathcal C}^{\rm fp}$ of finitely presented objects.  Namely the language which was described in the previous section, based on a skeletally small category, with a sort for each finitely presented object (in some small version of ${\mathcal C}^{\rm fp}$), {\em but} the contravariant version.  Contravariance is natural because each object $C$ of ${\mathcal C}$ is naturally a structure for the contravariant language.  Namely, the set of objects of $C$ of sort $X\in {\mathcal C}^{\rm fp}$ is defined to be the set, $(X,C)$, of morphisms from $X$ to $C$.  Then a morphism $\alpha:X\to Y$ induces, by composition, a map from $(Y,C)$ to $(X,C)$, hence the contravariance.  Of course, here we are interested in finitely accessible categories which are additive.  It also turns out that, in order to have model theory work essentially as in modules, we should require ${\mathcal C}$ to have products, but that is enough - every finitely accessible category with products is definable in the sense of Section \ref{secloc}.

Note that, for ${\mathcal C} = R\mbox{-}{\rm Mod}$ the language we obtain this way is richer than the usual 1-sorted language since it has at least one sort for every isomorphism type of finitely presented module $A$.  Recall that $(_RR,M)$ is naturally isomorphic to $M$ for $M\in R\mbox{-}{\rm Mod}$ (associate to $f:_RR \to M$ the element $f1$), so this language contains the usual 1-sorted language.  But the language has many more sorts though, as we will see next, they are all definable in terms of that ``home sort''.

For instance $(_RR^n,M) \simeq M^n$, so the $n$-tuples from a module are the elements of one of its sorts.  In fact, each of the new sorts $(A,-)$ is (non-canonically) isomorphic, given a choice of a finite sequence $\overline{a}$ of generators for $A$, to a definable subset of the sort $(-)^n$ where $n$ is the length of $\overline{a}$.  The identification takes a morphism $f:A\to M$ to the image $f\overline{a}$.  The formula defining the set of possible images is quantifier-free, consisting of the conjunction of any finitely many generators for the $R$-linear relations between the entries of $\overline{a}$.  So the enrichment of a module to this language is part of the enrichment obtained by adding all imaginary sorts (in the sense that we will discuss in Section \ref{secloc}).  Indeed it consists of the enrichment by pp-pairs of the form $\theta(\overline{x})/(\overline{x} = \overline{0})$  where $\theta$ is a quantifierfree pp formula.
\end{example}

\begin{example} {\bf Comodules over a coalgebra:}  Let $C$ be a $K$-coalgebra; then the structure map of a $C$-comodule $M$ has the form $\rho: M\to M\otimes_K C$.  At the outset, it was not at all clear how, or whether, such structures could be treated model-theoretically.  However, if $K$ is a field, then the category $C\mbox{-}{\rm Comod}$ of $C$-comodules is finitely accessible, indeed is locally finitely presented, which implies finitely accessible.  More generally, this is true if $K$ is noetherian and $C$ is a projective $K$-module, in which case $C\mbox{-}{\rm Comod}$ is equivalent to the category of ``rational" modules over the dual algebra and it is the case that every rational module over that dual algebra is a direct limit of finitely presented rational modules (see \cite{ReyThes}).

We conclude then, from the previous example, that we can develop a model theory of comodules as multisorted modules - see \cite{CrivPreRey}, \cite{ReyThes}, which also have references for the relevant background - with sorts corresponding to the finitely presented rational modules over the dual algebra.
\end{example}

\begin{example}  {\bf Presheaves and sheaves:}  If $(X, {\mathcal O}_X)$ is any ringed space then the category of presheaves over this space is locally finitely presented (see, e.g., \cite[\S I.3]{SGA4}), hence finitely accessible, so has a natural model theory based on the finitely presented presheaves.  The finitely presented presheaves are the extensions by $0$ of the restrictions of the structure sheaf ${\mathcal O}_X$ to the various open subsets.  If the space is noetherian then the category of sheaves is a nice Gabriel localisation of the category of presheaves and it follows that it also is locally finitely presented (for a more general result see \cite{Bridge}).  However, over general ringed spaces, the category of sheaves need not be finitely accessible (see, e.g., \cite[\S 16.3.4]{PreNBK}).

For some model theory of sheaves based on this language, see \cite{PRPya}.
\end{example}

\begin{example} {\bf Locally coherent sheaves:}  If $(X, {\mathcal O}_X)$ is a nice enough scheme (for instance, if it is compact and quasi-separated \cite[6.9.12]{EGA}), then the category of quasicoherent sheaves over it is locally finitely presented, hence these sheaves may be treated as multisorted modules, with the coherent sheaves, these being the finitely presented quasicoherent sheaves, labelling the sorts.
\end{example}

\section{Adding new sorts} \label{seceq+}

The imaginaries construction in model theory adds definable or, more generally, interpretable sets as new sorts to a language.  In the additive context, all sorts should have an induced abelian-group structure, and that forces us (see \cite[2.1]{BurPre1}) to restrict to adding sorts which are defined by pp formulas.

Given a language ${\mathcal L}$ for, possibly already multisorted, modules, we add, as new sorts, the pairs, denoted $\phi/\psi$, of pp formulas where $\phi$, $\psi$ are in the same free variables and, $\psi$ implies $\phi$ in the sense that $\psi(M) \leq \phi(M)$ for every module $M$.  We add symbols $+, 0$ in each new sort to express the induced abelian-group structure.  We also add, for each pair, $\phi/\psi$ and $\phi'/\psi'$ of sorts, a function symbol for every {\bf pp-definable function} from $\phi/\psi$ to $\phi'/\psi'$.  By that we mean a function which is given by a pp-definable relation from $\phi$ to $\phi'$ which well-defines a functional and total relation from $\phi(M)/\psi(M)$ to $\phi'(M)/\psi'(M)$ for every module $M$.\footnote{It is not necessary to check in every module $M$: the condition that one pp formula imply another is equivalent to a simple algebraic condition on the matrices used to express the formulas, see \cite[1.1.13]{PreNBK}.}  We denote this new language by ${\mathcal L}^{\rm eq+}$.  Every module $M$ is naturally enriched to an ${\mathcal L}^{\rm eq+}$-structure, denoted $M^{\rm eq+}$, by taking the collection of all groups $\phi(M)/\psi(M)$ and all pp-definable maps between them - this is naturally an ${\mathcal L}^{\rm eq+}$-structure.

Note that an additive ${\mathcal L}^{\rm eq+}$-structure is nothing more than an additive functor from the category, which we denote ${\mathbb L}^{\rm eq+}$, of pp-sorts and pp-definable functions between them.  We refer to this as the {\bf category of pp-pairs} or {\bf pp-imaginaries category} and, in the case that we started with a language for left ${\mathcal R}$-modules, we denote it $_{\mathcal R}{\mathbb L}^{\rm eq+}_{\mathcal R}$.  This category is, in fact, abelian (\cite[\S 1]{HerzDual}), and we will look at the significance of that later.  We will also compute some examples of these categories in Sections \ref{secA3}, \ref{secA3again} and \ref{sectensor}.

It is easy to check that the class of ${\mathcal L}^{\rm eq+}$-structures which arise as $M^{\rm eq+}$ for some module $M$ is axiomatisable (indeed definable in the sense considered in Section \ref{secloc}) and that $M\mapsto M^{\rm eq+}$ is an equivalence between the original module category and the full subcategory, of the category of all ${\mathbb L}^{\rm eq+}$-modules, on these structures.  So, in some sense, in replacing a module $M$ by $M^{\rm eq+}$, nothing has changed:  we still have the same objects and morphisms but each has been replaced by a highly-enriched version.

Just to be clear, there will in general be many ${\mathbb L}^{\rm eq+}$-modules which are not of the form $M^{\rm eq+}$; indeed the latter are those which, regarded as functors on ${\mathbb L}^{\rm eq+}$, are {\bf exact} (take exact sequences to exact sequences), see \ref{funD} below.

We saw already in Example \ref{exfinacc} how the modules over a ring become structures for a richer language which has a sort for each finitely presented module.  But that enriched language is just a part of the full pp-imaginaries language.  In fact, those sorts - the pairs $\theta/0$ where $\theta$ is a system of linear equations, are exactly the projective objects in the category ${\mathbb L}^{\rm eq+}$ (and, more generally, the pp formulas $\phi$, regarded as pp-pairs $\phi\overline{x}/\overline{x}=\overline{0}$, are the objects of projective dimension  $\leq 1$, see \cite[10.2.14]{PreNBK}).

This operation of forming the pp-imaginaries category is idempotent in the sense that $(M^{\rm eq+})^{\rm eq+}$ is naturally isomorphic to $M^{\rm eq+}$ (model-theoretically, this is clear; an algebraic reason is remarked in \cite[\S 5]{PreAxtFlat}).

\section{Three categories}\label{secthree}

Associated to any ring or skeletally small preadditive category $R$ are the following three skeletally small abelian categories:

\noindent $\bullet$ the category $_R{\mathbb L}^{\rm eq+}$ of pp imaginaries for left $R$-modules;

\noindent $\bullet$ the category $(R\mbox{-}{\rm mod}, {\bf Ab})^{\rm fp}$ of finitely presented additive functors on finitely presented $R$-modules - this is equivalent to $(R\mbox{-}{\rm mod}, K\mbox{-}{\rm Mod})^{\rm fp}$ if $R$ is a $K$-algebra with each element of $K$ acting centrally;

\noindent $\bullet$ the {\bf free abelian category} on $R$ - Freyd \cite{FreydLJ}, see also \cite{Adel}, showed that there is an embedding $R\to {\rm Ab}(R)$ of $R$ into an abelian category which has the following universal property:  for every additive functor $M:R\to {\mathcal A}$, where ${\mathcal A}$ is an abelian category, there is a unique-to-natural-equivalence extension of $M$ to an exact functor $\widetilde{M}$ making the following diagram commute.

$\xymatrix{R \ar[r] \ar[dr]_M & {\rm Ab}(R) \ar[d]^{\widetilde{M}} \\ & {\mathcal A}
}$

It follows that the free abelian category ${\rm Ab}(R)$ on $R$ is unique up to natural equivalence.  In fact, these categories all are equivalent.

\begin{theorem} \label{3cats}  For any ring or small preadditive category $R$, there are natural equivalences  $_R{\mathbb L}^{\rm eq+} \simeq (R\mbox{-}{\rm mod}, {\bf Ab})^{\rm fp} \simeq {\rm Ab}(R)$.  Furthermore, with reference to the diagram above, $\widetilde{M} = M^{\rm eq+}$.
\end{theorem}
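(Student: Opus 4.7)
The plan is to prove the two equivalences separately and then identify $\widetilde{M}$ with $M^{\rm eq+}$ by invoking uniqueness in the universal property of ${\rm Ab}(R)$.

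The heart of the argument is the first equivalence $_R{\mathbb L}^{\rm eq+} \simeq (R\mbox{-}{\rm mod}, {\bf Ab})^{\rm fp}$. Send a pp-pair $\phi/\psi$ to the functor $N \mapsto \phi(N)/\psi(N)$; the direct-limit proposition of the previous section guarantees that the restriction to $R\mbox{-}{\rm mod}$ loses no information, and the assignment extends to pp-definable functions in the obvious way. Finite presentability of the resulting functor is read off from the matrices defining $\phi$ and $\psi$: a quantifier-free system $H\overline{x}=0$ defines the kernel of a map between representables, an existential quantifier replaces a kernel by an image, and the quotient $\phi/\psi$ becomes a cokernel, so $F_{\phi/\psi}$ admits a presentation by finitely generated projective functors. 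Conversely, a finitely presented functor $F$ has a presentation ${\rm Hom}(B,-) \to {\rm Hom}(A,-) \to F \to 0$ with $A, B \in R\mbox{-}{\rm mod}$; fixing finite presentations of $A$ and $B$ in terms of representable left $R$-modules and reversing the matrix recipe produces a pp-pair whose associated functor is $F$. That pp-definable functions correspond to natural transformations is a Yoneda-type calculation. This step is the main technical obstacle, since matrices, quantifier alternations, and projective presentations must all be made to line up, but it is classical (Burke, Herzog; see \cite[Ch.~10]{PreNBK}).

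For the second equivalence $(R\mbox{-}{\rm mod}, {\bf Ab})^{\rm fp} \simeq {\rm Ab}(R)$, verify the universal property. The embedding $R \to (R\mbox{-}{\rm mod}, {\bf Ab})^{\rm fp}$ sends $r$ to the evaluation functor ${\rm ev}_r = {\rm Hom}_{R\mbox{-}{\rm mod}}((r,-),-)$, which is a projective in the target. Every object of $(R\mbox{-}{\rm mod}, {\bf Ab})^{\rm fp}$ is built from the ${\rm ev}_r$'s by finitely many kernels and cokernels: a general representable ${\rm Hom}(A,-)$ is a kernel of a map between finite direct sums of ${\rm ev}_r$'s (via a finite presentation of $A$), and any finitely presented functor is then a cokernel of a map between two such representables. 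Starting from the forced values $\widetilde{M}({\rm ev}_r) := M(r)$ and extending along these constructions yields the unique exact extension of an additive $M: R \to \mathcal{A}$ into an abelian category; this is Freyd's construction (\cite{FreydLJ}; see also \cite{Adel}).

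Finally, to identify $\widetilde{M} = M^{\rm eq+}$: the assignment $\phi/\psi \mapsto \phi(M)/\psi(M)$, together with the natural action on pp-definable functions, is precisely $M^{\rm eq+}$ regarded as a functor on $_R{\mathbb L}^{\rm eq+}$; by the characterisation recalled in the previous section, this functor is exact, and its restriction along the embedding of $R$ (where the object $r$ corresponds to the home sort $(x_r = x_r)/(x_r = 0)$) recovers $M$. Hence $M^{\rm eq+}$ is an exact extension of $M$, and uniqueness in the universal property of ${\rm Ab}(R)$ forces $M^{\rm eq+} = \widetilde{M}$. Steps~2 and~3 are then formal consequences of the classical dictionary established in Step~1.
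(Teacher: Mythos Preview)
Your proposal is correct and, for the identification $\widetilde{M}=M^{\rm eq+}$, matches the paper's own explanation exactly: both are exact extensions of $M$ along $R\to{\rm Ab}(R)$, so uniqueness in the universal property forces them to agree. The paper does not actually prove the two equivalences in the theorem; it states them as known results (the relevant sources being Freyd \cite{FreydLJ}, Adelman \cite{Adel}, Burke \cite{BurThes}, Herzog \cite{HerzDual}, and the account in \cite[Ch.~10]{PreNBK}) and then only explains the ``Furthermore'' clause in the short paragraph following the theorem. Your sketch therefore supplies considerably more than the paper does, and the outline you give for each equivalence---pp-pairs to finitely presented functors via the matrix recipe and free realisations, and the Freyd verification of the universal property for $(R\mbox{-}{\rm mod},{\bf Ab})^{\rm fp}$---is the standard route and is sound.
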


The second statement is explained by thinking of $M:R\to {\mathcal A}$ as an $R$-module (taking values in ${\mathcal A}$) and the domain of $\widetilde{M}$ as being the category $_R{\mathbb L}^{\rm eq+}$ of pp-imaginaries for $R$-modules.  Then $\widetilde{M}$ is equivalent to the functor which assigns to each pp-sort $\phi/\psi$ the group $\phi(M)/\psi(M)$ and similarly for pp-definable maps (for both are exact extensions of $M$).

If we think of the domain of $\widetilde{M}$ as the functor category $(R\mbox{-}{\rm mod}, {\bf Ab})^{\rm fp}$ then $\widetilde{M}$ is ${\rm ev_M}$ - evaluation-at-$M$ - which takes a functor $F\in (R\mbox{-}{\rm mod}, {\bf Ab})^{\rm fp}$ to its value $FM$.  At least, if $M$ is finitely presented then that makes sense since the functors are functors on finitely presented modules.  For general $M$, we use that each functor $F$ on finitely presented modules has an essentially unique extension which commutes with direct limits, to a functor $\overrightarrow{F}$ on all modules.  That functor is (well-)defined on a module $M$ by representing $M$ as a direct limit of finitely presented modules, applying the given functor $F$ to these and taking the direct limit of the resulting diagram of abelian groups (or objects of the Ind-completion of ${\mathcal A}$ in the general case).  So we should write ${\rm ev}_M:F\to \overrightarrow{F}(M)$:  in this way we may regard the functors in $(R\mbox{-}{\rm mod}, {\bf Ab})^{\rm fp}$ as acting on the whole of $R\mbox{-}{\rm Mod}$.

These are two complementary views of what $M^{\rm eq+}$ is:  as the functor just described, alternatively as the ``image'' of that functor - literally as the collection of images of the objects of the category and all the image maps between these - that is precisely $M^{\rm eq+}$ regarded as an $_R{\mathbb L}^{\rm eq+}$-structure.

\section{An example: $A_3$}\label{secA3}

We illustrate the ideas and results above by computing the free abelian category on the path algebra $R=KA_3$ where $K$ is a field.  We will compute the objects of that category as functors on finitely presented $R$-modules and as pp-pairs.

In fact, this example is particularly simple, first in that $R$ has finite representation type.  That implies that the functor category $(R\mbox{-}{\rm mod}, {\bf Ab})^{\rm fp}$ is actually equivalent to the category of modules over the {\bf Auslander algebra} of $R$.  That is $S={\rm End}(_RM_0)$ where $M_0$ is the direct sum of one copy of each indecomposable $R$-module.  If we regard $S$ as acting on the left of $R$-modules, then the functor category is equivalent to the category of left $S$-modules.  We will prove a somewhat more general result in Section \ref{secA3again} which will be useful in computing localisations of this functor category.

The further simplification in this example is that the Auslander algebra will itself prove to be of finite representation type (that need not be the case in general, even if the ring $R$ is of finite representation type).  It is a fairly short exercise to compute all the indecomposable functors and the morphisms between them.  Since $S$ is an {\bf artin algebra} (finitely generated as a module over an artinian centre), Auslander-Reiten theory can be used to compute its modules, just as for $R=KA_3$.

One may present $S$ as a subring of the ring of $6\times 6$ matrices over $K$, because there are $6$ indecomposable $R$-modules and the morphisms between any pair of indecomposable $R$-modules form a $\leq 1$-dimensional space over $K$.  Alternatively, $S$ may be represented as the $K$-path algebra of the Auslander-Reiten quiver of $R$, including the relations which come from the three Auslander-Reiten sequences in that quiver.  We will use the latter representation since then the $S$-modules may be given as representations of that quiver. For this particular algebra $S$, it is the case that each indecomposable is determined by the dimensions of the $K$-vector spaces at the 6 vertices (of the AR-quiver of $R$), so it is enough, in describing the indecomposables, just to give these dimensions.  The maps within each representation are the ``obvious'' ones:  each arrow of the Auslander-Reiten quiver of $R$ being represented by an isomorphism if possible, the zero map otherwise and, of course, all relations in the Auslander-Reiten quiver must be satisfied.  For example the picture

\begin{tikzpicture}
\node at (6.3,0) {$0$}; \node at (6.5,0.2) {$1$}; \node at (6.7,0.4) {$1$};
\node at (6.7,0) {$1$}; \node at (6.9,0.2) {$1$}; \node at (7.1,0) {$0$};
\end{tikzpicture}

\noindent means the representation (equivalently, $S$-module)

\begin{tikzpicture}
\node at (0,0) {$0$}; \node at (0.6,0.6) {$K$}; \node at (1.2,1.2) {$K$};
\node at (1.2,0) {$K$}; \node at (1.8,0.6) {$K$}; \node at (2.4,0) {$0$};

\draw [->] (0.15,0.15) to (0.4,0.4);
\draw [->] (0.75,0.75) to (1,1);
\draw [->] (0.75,0.4) to (1,0.1);
\draw [->] (1.35,0.15) to (1.6,0.4);
\draw [->] (1.35,1) to (1.6,0.7);
\draw [->] (1.95,0.4) to (2.2,0.1);
\end{tikzpicture}

\noindent where each arrow shown is $0$ if it must be and otherwise an isomorphism (``the identity'') and where, necessarily, the composition of the top two arrows is the negative of the composition of the lower two (reflecting one of the three Auslander-Reiten sequences in the AR-quiver of $R$ which was seen in Section \ref{secmodrepfun}).

Here, then, is the Auslander-Reiten quiver of $S$.  It is reasonable to assert that this gives a complete picture of the category of all $S$-modules:  it shows all indecomposable $S$-modules (and every $S$-module is a direct sum of these).  Furthermore, every morphism between indecomposable $S$-modules is a $K$-linear combination of compositions of the irreducible maps (the maps shown on the AR-quiver).  In fact, in this very simple case, every map is just a scalar multiple of compositions of irreducible maps.  Then every morphism between $S$-modules can be given as a matrix of morphisms between its indecomposable components.  So all this information is essentially in the following quiver with relations (those being given by the Auslander-Reiten sequences)

\begin{tikzpicture}

\node at (0,0) {$0$}; \node at (0.2,0.2) {$0$}; \node at (0.4,0.4) {$0$};
\node at (0.4,0) {$0$}; \node at (0.6,0.2) {$0$}; \node at (0.8,0) {$1$};
\draw [->,thick] (0.75,0.35) to (1.25,0.85);
\draw [dotted, thick] (1,0.2) to (2.2,0.2);

\node at (1.2,1.2) {$0$}; \node at (1.4,1.4) {$0$}; \node at (1.6,1.6) {$0$};
\node at (1.6,1.2) {$0$}; \node at (1.8,1.4) {$1$}; \node at (2,1.2) {$1$};
\draw [->,thick] (1.95,1.55) to (2.45,2.05);
\draw [->,thick] (1.95,0.85) to (2.45,0.35);
\draw [dotted, thick] (2.2,1.4) to (3.4,1.4);

\node at (2.4,2.4) {$0$}; \node at (2.6,2.6) {$0$}; \node at (2.8,2.8) {$1$};
\node at (2.8,2.4) {$0$}; \node at (3,2.6) {$1$}; \node at (3.2,2.4) {$1$};
\draw [->,thick] (3.15,2.05) to (3.65,1.55);

\node at (2.4,0) {$0$}; \node at (2.6,0.2) {$0$}; \node at (2.8,0.4) {$0$};
\node at (2.8,0) {$0$}; \node at (3,0.2) {$1$}; \node at (3.2,0) {$0$};
\draw [->,thick] (3.15,0.35) to (3.65,0.85);
\draw [->,thick] (3.15,-0.35) to (3.65,-0.85);
\draw [dotted, thick] (3.4,0.2) to (4.6,0.2);

\node at (3.6,1.2) {$0$}; \node at (3.8,1.4) {$0$}; \node at (4,1.6) {$1$};
\node at (4,1.2) {$0$}; \node at (4.2,1.4) {$1$}; \node at (4.4,1.2) {$0$};
\draw [->,thick] (4.35,0.85) to (4.85,0.35);
\draw [dotted, thick] (4.6,1.4) to (6,1.4);

\node at (3.6,-1.2) {$0$}; \node at (3.8,-1) {$0$}; \node at (4,-0.8) {$0$};
\node at (4,-1.2) {$1$}; \node at (4.2,-1) {$1$}; \node at (4.4,-1.2) {$0$};
\draw [->,thick] (4.35,-0.85) to (4.85,-0.35);
\draw [dotted,thick] (4.65,-1.2) to (6.2,-1.2);

\node at (4.8,0) {$0$}; \node at (5,0.2) {$0$}; \node at (5.2,0.4) {$1$};
\node at (5.2,0) {$1$}; \node at (5.4,0.2) {$1$}; \node at (5.6,0) {$0$};
\draw [->,thick] (5.6,0.4) to (6.35,1.15);
\draw [->,thick] (5.6,-0.4) to (6.35,-1.15);
\draw [->,thick] (5.7,0.2) to (6.2,0.2);
\draw [dotted,thick] (5.7,0.3) to [out=50, in=130] (7.7,0.3);

\node at (6.3,0) {$0$}; \node at (6.5,0.2) {$1$}; \node at (6.7,0.4) {$1$};
\node at (6.7,0) {$1$}; \node at (6.9,0.2) {$1$}; \node at (7.1,0) {$0$};
\draw [->,thick] (7.2,0.2) to (7.7,0.2);
\draw [->,thick] (4.35,0.85) to (4.85,0.35);
\draw [dotted,thick] (7.35,1.4) to (8.8,1.4);

\node at (6.3,1.5) {$0$}; \node at (6.5,1.7) {$0$}; \node at (6.7,1.9) {$0$};
\node at (6.7,1.5) {$1$}; \node at (6.9,1.7) {$0$}; \node at (7.1,1.5) {$0$};
\draw [->,thick] (7.1,1.1) to (7.85,0.35);

\node at (6.3,-1.5) {$0$}; \node at (6.5,-1.3) {$0$}; \node at (6.7,-1.1) {$1$};
\node at (6.7,-1.5) {$0$}; \node at (6.9,-1.3) {$0$}; \node at (7.1,-1.5) {$0$};
\draw [->,thick] (7.1,-1.1) to (7.85,-0.35);
\draw [dotted, thick] (7.3,-1.2) to (8.8,-1.2);

\node at (7.8,0) {$0$}; \node at (8,0.2) {$1$}; \node at (8.2,0.4) {$1$};
\node at (8.2,0) {$1$}; \node at (8.4,0.2) {$0$}; \node at (8.6,0) {$0$};
\draw [->,thick] (8.55,0.35) to (9.05,0.85);
\draw [->,thick] (8.55,-0.35) to (9.05,-0.85);
\draw [dotted, thick] (8.8,0.2) to (10,0.2);

\node at (9,1.2) {$0$}; \node at (9.2,1.4) {$1$}; \node at (9.4,1.6) {$1$};
\node at (9.4,1.2) {$0$}; \node at (9.6,1.4) {$0$}; \node at (9.8,1.2) {$0$};
\draw [->,thick] (9.85,1.55) to (10.35,2.05);
\draw [dotted, thick] (10,1.4) to (11.4,1.4);

\node at (9,-1.2) {$0$}; \node at (9.2,-1) {$1$}; \node at (9.4,-0.8) {$0$};
\node at (9.4,-1.2) {$1$}; \node at (9.6,-1) {$0$}; \node at (9.8,-1.2) {$0$};
\draw [->,thick] (9.75,-0.85) to (10.25,-0.35);
\draw [->,thick] (9.75,0.85) to (10.25,0.35);

\node at (10.2,0) {$0$}; \node at (10.4,0.2) {$1$}; \node at (10.6,0.4) {$0$};
\node at (10.6,0) {$0$}; \node at (10.8,0.2) {$0$}; \node at (11,0) {$0$};
\draw [->,thick] (10.95,0.35) to (11.45,0.85);

\node at (10.2,2.4) {$1$}; \node at (10.4,2.6) {$1$}; \node at (10.6,2.8) {$1$};
\node at (10.6,2.4) {$0$}; \node at (10.8,2.6) {$0$}; \node at (11,2.4) {$0$};
\draw [->,thick] (10.95,2.05) to (11.45,1.55);
\draw [dotted, thick] (11.2,0.2) to (12.4,0.2);

\node at (11.5,1.2) {$1$}; \node at (11.7,1.4) {$1$}; \node at (11.9,1.6) {$0$};
\node at (11.9,1.2) {$0$}; \node at (12.1,1.4) {$0$}; \node at (12.3,1.2) {$0$};
\draw [->,thick] (12.15,0.85) to (12.65,0.35);

\node at (12.6,0) {$1$}; \node at (12.8,0.2) {$0$}; \node at (13,0.4) {$0$};
\node at (13,0) {$0$}; \node at (13.2,0.2) {$0$}; \node at (13.4,0) {$0$};

\end{tikzpicture}

Bear in mind that, by \ref{3cats}, this is also a complete picture of the category of (finitely presented) functors on finitely presented $R$-modules, equivalently of the category of pp-sorts and pp-definable maps between them.  So everything about the, admittedly rather simple, model theory of $KA_3$-modules is contained in this picture.

This picture does illustrate much that is general, so let us look in detail at some of these $17$ indecomposable functors/pp-sorts.  Let us label them as $F_i$ with $i$ as in the following diagram.

\begin{tikzpicture}
\node at (0,0) {$1$}; \node at (0.3,0.3) {$2$}; \node at (0.6,0.6) {$3$};
\node at (0.6,0) {$4$}; \node at (0.9,0.3) {$5$}; \node at (0.9,-0.3) {$6$};
\node at (1.2,0) {$7$};
\node at (1.5,0.4) {$8$}; \node at (1.5,0) {$9$}; \node at (1.5,-0.4) {$10$};
\node at (1.9,0) {$11$}; \node at (2.2,0.3) {$12$}; \node at (2.2,-0.3) {$13$};
\node at (2.5,0) {$15$}; \node at (2.5,0.6) {$14$};
\node at (2.8,0.3) {$16$}; \node at (3.1,0) {$17$};
\end{tikzpicture}

\noindent And let us also label the indecomposable $R$-modules as follows, locating each at its position in the Auslander-Reiten quiver of $R$.

\begin{tikzpicture}
\node at (6.3,0) {$P_3$}; \node at (6.6,0.3) {$P_2$}; \node at (6.9,0.6) {$P_1$};
\node at (6.9,0) {$S_2$}; \node at (7.2,0.3) {$I_2$}; \node at (7.5,0) {$I_1$};
\end{tikzpicture}

\noindent Here $S_i$, $P_i$, $I_i$ respectively denote the simple module at vertex $i$, the projective cover of $S_i$ and the injective hull of $S_i$.  So we also have $S_3=P_3$, $I_3=P_1$, $S_1=I_1$.

Each indecomposable $R=KA_3$-module $N$ gives us the representable functor $(N,-)$ and these are precisely the indecomposable projective functors.  Consider the indecomposable functor $F_9$.  This is both projective and injective (one can see that from the AR-quiver since it neither ends nor begins an almost split sequence) and is readily identified as $(P_2,-)$ since, as is easily seen from the AR-quiver of $R$, the dimensions ${\rm dim}(P_2,X)$ as $X$ ranges over the six indecomposable $R$-modules, give exactly the pattern of $0$s and $1$s seen in $F_9$.  As a $3$-sorted module $P_2$ is cyclic, generated by an element of sort $2$, so $(P_2,-)$ is isomorphic to the pp-pair $(x_2=x_2)/(x_2=0)$.  If, instead, we think of $P_2$ as a 1-sorted $KA_3$-module, then we can use the pp-pair $(e_1x=0 \wedge e_3x=0)/(x=0)$.

Next, let us look at the factor $F_{11}$ of $F_9=(P_2,-)$:  clearly this is $(P_2,-)$ modulo its socle but it is perhaps easier to express this in terms of pp formulas for $R$-modules if we give $F_{11}$ as the cokernel of a map between representable functors.  So replace the socle of $(P_2,-)$ by its projective cover, which is clearly (from consulting the list of representable functors) $(I_2,-)$.  Therefore we get the exact sequence
$$
\begin{tikzpicture}
\node at (0,0) {$0$}; \node at (0.2,0.2) {$0$}; \node at (0.4,0.4) {$0$};
\node at (0.4,0) {$0$}; \node at (0.6,0.2) {$1$}; \node at (0.8,0) {$1$};
\end{tikzpicture}
\to
\begin{tikzpicture}
\node at (0,0) {$0$}; \node at (0.2,0.2) {$1$}; \node at (0.4,0.4) {$1$};
\node at (0.4,0) {$1$}; \node at (0.6,0.2) {$1$}; \node at (0.8,0) {$0$};
\end{tikzpicture}
\to
\begin{tikzpicture}
\node at (0,0) {$0$}; \node at (0.2,0.2) {$1$}; \node at (0.4,0.4) {$1$};
\node at (0.4,0) {$1$}; \node at (0.6,0.2) {$0$}; \node at (0.8,0) {$0$};
\end{tikzpicture}
\to 0
$$
That is,
$$(I_2,-) \xrightarrow{(f,-)} (P_2,-) \to F_{11} \to 0$$
where $f:P_2 \to I_2$ is the obvious morphism of $R$-modules.
Therefore, for any $R$-module $M$, $F_{11}(M) = (P_2,M)/{\rm im}(f,M)$ - the morphisms from $P_2$ to $M$ modulo those which factor through $f$, that is $e_2M/\alpha M$.  Therefore, as a pp-pair, we can write $F_{11}$ as $(x_2=x_2)/(\exists x_1 \,\, \alpha x_1 = x_2)$ if we are using the 3-sorted language for $R$-modules, or as $(x=e_2x)/(\exists y\,\, \alpha y=x)$ if using the 1-sorted language.

By \ref{3cats} every morphism in this category of pp-pairs is pp-definable.  As remarked already, in this example every non-isomorphism between indecomposables is a $K$-linear combination of compositions of the morphisms seen as arrows in the Auslander-Reiten quiver above (this follows from finite representation type using that the irreducible morphisms generate the radical of the ring with many objects which is $R\mbox{-}{\rm mod}$).  So, in order to give pp-definitions of the morphisms, it is enough to do that for those arrows.  That can be done explicitly and easily using the recipe from \cite{BurThes} for computing a pp formula which defines a given natural transformation between finitely presented functors (see the proof of \cite[10.2.30]{PreNBK}).  In fact, in this example, none of the maps is very ``interesting'', but we compute one nevertheless to illustrate the general procedure.

Consider the morphism (natural transformation) $\tau:F_{11} \to F_{16}$ which is got by composing three arrows/irreducible morphisms.  Regarding $F_{11}$ as a pp-pair $\phi/\psi$, and $F_{16}$ as $\phi'/\psi'$ (we could take $(e_3x=x)/(\exists y \,\, x=\beta\alpha y)$ for the latter), a free realisation\footnote{A {\bf free realisation} of a pp formula $\phi(\overline{x})$ is a finitely presented module $C$ and a tuple $\overline{c}$ from $C$ such that the pp-type of $\overline{c}$ in $C$ is generated by $\phi$.  This is equivalent to giving a surjection from a projective=representable functor onto $F_\phi$, see \cite[10.2.8]{PreNBK}.} of $\phi$ is $(P_2,c)$ where $c$ is any generator of $P_2$ - this is clear but also follows from \cite[10.2.25]{PreNBK}.  We look at the component of $\tau$ at $P_2$; the value of each of $F_{11}$ and $F_{16}$ at $P_2$ is $K$ and the component map is an isomorphism, $1_K$.  Therefore, an element $c'$ of $P_2$ such that the component $\tau_{P_2}$ takes $c+\psi(F_{11})$ to $c'+\psi'(F_{16})$ is just $c$ itself (in more interesting examples, this would be something different!).  A generator of the pp-type of $(c,c')$, that is $(c,c)$, is $x'=x$, so this is a pp formula which defines that natural transformation.

\vspace{4pt}

We will return to this example in Section \ref{secA3again} to illustrate localisation and some other things.

\section{Adding more conditions: localisation and definable subcategories} \label{secloc}

We may want to consider the model theory, not of all $R$-modules, but of some of them.
For instance, given a module $M$, it is model-theoretically natural to consider the class of modules elementarily equivalent to $M$.  It turns out to be very convenient to expand this to the class of all direct summands of modules elementarily equivalent to $M$.  Then this is a typical definable subcategory of $R\mbox{-}{\rm Mod}$ in the sense of the following definition (I am now using $R$ to denote any skeletally small preadditive category).

We say that a subcategory of $R\mbox{-}{\rm Mod}$ (full, and closed under isomorphisms) is {\bf definable} if it is closed under direct products, direct limits and pure submodules.  Recall that an embedding $f:A\to B$ between $R$-modules is {\bf pure} if, for every pp formula $\phi$, we have $\phi(A) = A^n\cap \phi(B)$, where $n$ is the number of free variables of $\phi$.

\begin{theorem} (see, e.g., \cite[\S 3.4]{PreNBK}) The following are equivalent for a subcategory ${\mathcal D}$ of $R\mbox{-}{\rm Mod}$ (full and closed under isomorphisms):

\noindent (i) ${\mathcal D}$ is a definable subcategory of $R\mbox{-}{\rm Mod}$;

\noindent (ii) there is a module $M$ such that ${\mathcal D}$ is the class of direct summands of modules elementarily equivalent to direct sums\footnote{If, for instance, $R$ is an algebra over an infinite field, then it is enough to take modules which are direct summands of modules elementarily equivalent to $M$.} of copies of $M$;

\noindent (iii) there is a set $\Phi$ of pp-pairs for $R$-modules such that ${\mathcal D} = \{ M\in R\mbox{-}{\rm Mod}: \phi(M)/\psi(M)=0 \,\, \forall \phi/\psi\in \Phi\}$.
\end{theorem}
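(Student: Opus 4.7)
The plan is to go round the cycle (iii) $\Rightarrow$ (i) $\Rightarrow$ (iii) and then to obtain (i) $\Leftrightarrow$ (ii). The implication (iii) $\Rightarrow$ (i) is mechanical: each pp formula $\phi(\overline{x}) \equiv \exists\, \overline{y}\, G(\overline{x}\,\overline{y})=0$ visibly commutes with direct products, commutes with direct limits by the proposition recalled in the previous section, and, for a pure embedding $A \hookrightarrow B$, satisfies $\phi(A) = A^n \cap \phi(B)$, so that $\phi(B) \le \psi(B)$ forces $\phi(A) \le \psi(A)$. Hence any class cut out by vanishing of pp-pairs inherits all three closures.

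For (i) $\Rightarrow$ (iii), I would set $\Phi = \{\phi/\psi : \phi(N) = \psi(N) \text{ for every } N \in \mathcal{D}\}$ and let $\mathcal{D}^\star = \{M : \phi(M) = \psi(M) \,\forall\, \phi/\psi \in \Phi\}$. The containment $\mathcal{D} \subseteq \mathcal{D}^\star$ is by construction, so the real work is $\mathcal{D}^\star \subseteq \mathcal{D}$. The natural tool is Theorem~\ref{3cats}: the pp-imaginaries category ${}_R\mathbb{L}^{\rm eq+}$ is abelian, and $\mathcal{S}_{\mathcal{D}} := \{F : F(N)=0 \,\forall\, N \in \mathcal{D}\}$ is a Serre subcategory (stability under subobjects, quotients and extensions follows because each evaluation $F \mapsto F(N)$ is exact). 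One then verifies that $\mathcal{D} \mapsto \mathcal{S}_{\mathcal{D}}$ and $\mathcal{S} \mapsto \{M : F(M)=0 \,\forall\, F \in \mathcal{S}\}$ are mutually inverse bijections between definable subcategories of $R\text{-}{\rm Mod}$ and Serre subcategories of ${}_R\mathbb{L}^{\rm eq+}$; granting this, $\mathcal{D}^\star = \mathcal{D}$ immediately, which is (iii). The main obstacle is the nontrivial half of that bijection, namely showing that if every pp-pair closed throughout $\mathcal{D}$ is closed on $M$, then $M$ itself belongs to $\mathcal{D}$. The standard route is via pure-injectives: $M$ embeds purely into its pure-injective envelope $H(M)$, which shares its pp-pair behaviour; a Ziegler-spectrum decomposition realises $H(M)$ as a pure submodule of a product of indecomposable pure-injectives, each of which is forced by its pp-type to lie in $\mathcal{D}$; the closure of $\mathcal{D}$ under products and pure submodules then returns $M$ to $\mathcal{D}$.

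For (i) $\Leftrightarrow$ (ii), assuming (i), I would take $M$ to be the direct sum of one representative from each isomorphism class of indecomposable pure-injective belonging to $\mathcal{D}$ (these form a set, being the points of the closed subset of the Ziegler spectrum corresponding to $\mathcal{D}$). Using that pp formulas commute with direct sums, $M$ and $\mathcal{D}$ satisfy exactly the same vanishing-of-pp-pair conditions. Since each condition ``$\phi/\psi$ is closed'' is a first-order scheme preserved by elementary equivalence, direct sums and direct summands, the class described by (ii) is contained in the class cut out by the pp-pairs in $\Phi$, which by (iii) is $\mathcal{D}$. Conversely, every $N \in \mathcal{D}$ is a direct summand of its pure-injective envelope, which sits inside a suitable ultraproduct elementarily equivalent to a direct sum of copies of $M$; this gives the reverse containment and yields (ii). The direction (ii) $\Rightarrow$ (i) is then immediate: the class in (ii) is precisely defined by the set of pp-pairs that close in $M$, so is of the form (iii), hence satisfies (i).
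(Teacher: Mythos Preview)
The paper does not give a proof of this theorem; it is stated with a reference to \cite[\S 3.4]{PreNBK}. So there is nothing in the paper to compare your argument against, and I will simply assess the proposal on its own.

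Your (iii) $\Rightarrow$ (i) is fine. There are, however, two genuine problems elsewhere.

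In (i) $\Rightarrow$ (iii), the sentence ``each of which is forced by its pp-type to lie in $\mathcal D$'' is exactly the point at issue and is not justified. You have reduced to showing: if $N$ is an indecomposable pure-injective closing every pp-pair closed on $\mathcal D$, then $N\in\mathcal D$. But that is the same statement you are trying to prove, restricted to indecomposable pure-injectives; nothing in your sketch explains why the restriction helps. The standard argument here is not a ``decomposition'' of $H(M)$ but rather the construction, for any $N\notin\mathcal D$, of a pp-pair open on $N$ and closed on $\mathcal D$, using closure of $\mathcal D$ under products and pure submodules together with pure-injectivity of $N$ (this is the substantive content behind the Ziegler topology). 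Invoking the bijection $\mathcal D\leftrightarrow\mathcal S_{\mathcal D}$ as a black box is circular, since in the paper that bijection is stated \emph{after} the present theorem and is typically proved using it.

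In (i) $\Rightarrow$ (ii), the claim ``every $N\in\mathcal D$ is a direct summand of its pure-injective envelope'' is false: $N$ is only a pure submodule of $H(N)$, and is a direct summand precisely when $N$ is already pure-injective. Even granting your embedding of $H(N)$ into an ultraproduct, that would give a pure embedding, not a direct-summand inclusion, so you would not land in the class described in (ii). The usual fix is different: with your choice of $M$, the module $M^{(\aleph_0)}$ has every Baur--Monk invariant equal to $0$ or $\infty$, so for any $N\in\mathcal D$ one has $N\oplus M^{(\aleph_0)}\equiv M^{(\aleph_0)}$ by pp-elimination, exhibiting $N$ as a direct summand of a module elementarily equivalent to a direct sum of copies of $M$.
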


For pp-pairs, see Section \ref{seceq+}.

It is the case that every definable subcategory ${\mathcal D}$ is closed under pure-injective hulls and has the property that if $0\to A \to B \to C \to 0$ is a {\bf pure-exact sequence} (that is, if the sequence is exact and $A\to B$ is a pure embedding), then $B\in {\mathcal D}$ iff $A, C \in {\mathcal D}$.

We say that a module $M$ {\bf generates} the definable category ${\mathcal D}$ if ${\mathcal D}$ is the smallest definable category containing it (then $M$ will be as in (ii)).  A generator $N$ such that the modules in ${\mathcal D}$ are those which are pure in direct products of copies of $N$ is said to be an {\bf elementary cogenerator} for ${\mathcal D}$ (such a module exists, given ${\mathcal D}$, see \cite[5.3.52]{PreNBK}).

Thus the definable subcategories of $R\mbox{-}{\rm Mod}$ are obtained by declaring certain pp-sorts to be $0$.  Note that closure of such a pp-pair $\phi/\psi$ is expressed by the sentence $\forall \overline{x} \, (\phi(\overline{x}) \rightarrow \psi(\overline{x}))$ (the other implication holds by our definition of pp-pair).

Given a definable subcategory ${\mathcal D}$ of $R\mbox{-}{\rm Mod}$, we set ${\mathcal S}_{\mathcal D} = \{ \phi/\psi: \phi(D)/\psi(D)=0 \,\, \forall \, D\in {\mathcal D}\}$ - the set of pp-pairs closed on every module in ${\mathcal D}$.  This is a {\bf Serre subcategory} of the category $_R{\mathbb L}^{\rm eq+}$ of pp-pairs for $R$-modules, meaning that if $0 \to A \to B \to C \to 0$ is an exact sequence in the category of pp-pairs, then $B\in {\mathcal S}_{\mathcal D}$ iff $A, C \in {\mathcal S}_{\mathcal D}$.  If we prefer to view that category as the category of finitely presented functors on finitely presented modules (\ref{3cats}) then the definition becomes ${\mathcal S}_{\mathcal D} = \{ F\in (R\mbox{-}{\rm mod}, {\bf Ab})^{\rm fp}: \overrightarrow{F}D=0 \,\, \forall \, D\in {\mathcal D}\}$.

\begin{theorem}  Given a ring, or skeletally small preadditive category, $R$, there is a natural bijection ${\mathcal D} \leftrightarrow {\mathcal S}_{\mathcal D}$ between definable subcategories of $R\mbox{-}{\rm Mod}$ and Serre subcategories of the free abelian category ${\rm Ab}(R)$ on $R$.
\end{theorem}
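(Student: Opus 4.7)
First, I would check that ${\mathcal S}_{\mathcal D}$ is a Serre subcategory of $_R{\mathbb L}^{\rm eq+}\simeq {\rm Ab}(R)$. If $0\to A\to B\to C\to 0$ is exact in the category of pp-pairs, then since the evaluation functor at any module $D$ is exact (Theorem \ref{3cats}), evaluating yields an exact sequence of abelian groups; thus $B$ vanishes on every $D\in {\mathcal D}$ iff both $A$ and $C$ do.

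Conversely, given a Serre subcategory ${\mathcal S}$ of ${\rm Ab}(R)$, set ${\mathcal D}_{\mathcal S}=\{M\in R\mbox{-}{\rm Mod}: F(M)=0 \text{ for all } F\in {\mathcal S}\}$. Closure of ${\mathcal D}_{\mathcal S}$ under direct products is immediate since pp-pairs commute with products; closure under direct limits follows from the Section 3 proposition that $\phi(\varinjlim M_\lambda)=\varinjlim \phi(M_\lambda)$ (combined with exactness of directed colimits to pass from pp formulas to pp-pairs); and closure under pure subobjects follows from the defining identity $\phi(A)=A^n\cap \phi(B)$ for pure embeddings, since $\phi(B)=\psi(B)$ then forces $\phi(A)=\psi(A)$. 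Hence ${\mathcal D}_{\mathcal S}$ is definable. The inclusion ${\mathcal D}\subseteq {\mathcal D}_{{\mathcal S}_{\mathcal D}}$ is tautological, and the reverse inclusion is precisely clause (iii) of the preceding characterisation of definable subcategories.

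The heart of the proof is ${\mathcal S}_{{\mathcal D}_{\mathcal S}}={\mathcal S}$; only the inclusion $\subseteq$ is nontrivial, and amounts to the claim that for each $F\notin {\mathcal S}$ some $M\in {\mathcal D}_{\mathcal S}$ has $F(M)\neq 0$. I would pass to the Serre quotient ${\mathcal Q}={\rm Ab}(R)/{\mathcal S}$ with exact quotient functor $q:{\rm Ab}(R)\to {\mathcal Q}$. By the universal property of ${\rm Ab}(R)$ (Theorem \ref{3cats}), exact functors ${\rm Ab}(R)\to {\bf Ab}$ correspond via $M\mapsto M^{\rm eq+}$ to $R$-modules, and such a functor factors through $q$ precisely when the associated module lies in ${\mathcal D}_{\mathcal S}$. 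The task reduces to exhibiting, for each nonzero object of ${\mathcal Q}$, an exact functor ${\mathcal Q}\to {\bf Ab}$ that does not kill it.

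The main obstacle is this last step: producing enough exact functors from the Serre quotient to ${\bf Ab}$. I would address it by embedding ${\mathcal Q}$ via Yoneda into its Ind-completion ${\rm Lex}({\mathcal Q}^{\rm op},{\bf Ab})$, which is a Grothendieck category and therefore possesses an injective cogenerator $E$; the contravariant functor ${\rm Hom}(-,E)$ is then exact and faithful on the Yoneda image, and pulling back through $q$ yields an $R$-module in ${\mathcal D}_{\mathcal S}$ on which $F$ does not vanish, completing the bijection.
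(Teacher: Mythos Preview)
The paper states this theorem without proof (it is quoted as a known result, with references elsewhere), so there is nothing in the paper to compare against directly.

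Your overall strategy is the standard one and the first three steps are fine. There is, however, a genuine variance slip in the final step. You embed ${\mathcal Q}$ in its Ind-completion ${\rm Lex}({\mathcal Q}^{\rm op},{\bf Ab})$ via the covariant Yoneda embedding and then apply ${\rm Hom}(-,E)$ for an injective cogenerator $E$. As you yourself note, this functor is \emph{contravariant} on ${\mathcal Q}$; precomposing with the exact quotient $q:{\rm Ab}(R)\to{\mathcal Q}$ therefore produces a contravariant exact functor on ${\rm Ab}(R)$. But the universal property of ${\rm Ab}(R)$ in Theorem~\ref{3cats} identifies \emph{covariant} exact functors ${\rm Ab}(R)\to{\bf Ab}$ with left $R$-modules; a contravariant exact functor corresponds (via ${\rm Ab}(R)^{\rm op}\simeq{\rm Ab}(R^{\rm op})$) to a \emph{right} $R$-module. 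So what you have constructed does not lie in ${\mathcal D}_{\mathcal S}\subseteq R\mbox{-}{\rm Mod}$, and the argument does not close.

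The repair is to run the same construction on the other side. Embed ${\mathcal Q}^{\rm op}$ into ${\rm Ind}({\mathcal Q}^{\rm op})={\rm Lex}({\mathcal Q},{\bf Ab})$ via Yoneda (which is exact since ${\mathcal Q}^{\rm op}$ is abelian), choose an injective cogenerator $E$ there, and form $X\mapsto {\rm Hom}\big({\rm Hom}_{\mathcal Q}(X,-),\,E\big)$. This is now a faithful exact \emph{covariant} functor ${\mathcal Q}\to{\bf Ab}$, and pulling back along $q$ gives, by Theorem~\ref{3cats}, a left $R$-module in ${\mathcal D}_{\mathcal S}$ on which $F$ does not vanish.
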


These also are in natural bijection with the closed subsets of the Ziegler spectrum of $R$:  a topological space which has the isomorphism classes of indecomposable pure-injective $R$-modules for its points and the supports of pp-pairs = finitely presented functors for a basis of open sets.  See, for example, \cite{Zie}, \cite{PreNBK} for details (which we don't need here since our focus now will be on the functor category).

\vspace{4pt}

If ${\mathcal D}$ is a definable subcategory of $R\mbox{-}{\rm Mod}$ then, of course, we may do model theory for the modules in ${\mathcal D}$ using a language for $R$-modules.  But there is a more intrinsic language associated to ${\mathcal D}$, obtained by localising the category of pp-pairs for $R$-modules at the Serre subcategory ${\mathcal S}_{\mathcal D}$.  This uses the following general construction.

\begin{theorem}  Suppose that ${\mathcal A}$ is an abelian category and let ${\mathcal S}$ be a Serre subcategory of ${\mathcal A}$.  Then there is an {\bf quotient} abelian category ${\mathcal A}/{\mathcal S}$ and an exact functor $Q:{\mathcal A} \to {\mathcal A}/{\mathcal S}$ such that $Q{\mathcal S} =0$ and with the property that if ${\mathcal B}$ is an abelian category and $G:{\mathcal A} \to {\mathcal B}$ is an exact functor with $G{\mathcal S}=0$, then there is an essentially unique factorisation of $G$ through $Q$ as $G=HQ$, and $H$ is exact.

$\xymatrix{
{\mathcal A} \ar[r]^Q \ar[dr]_{G} & {\mathcal A}/{\mathcal S} \ar[d]^{H \mbox{ exact}} \\ & {\mathcal B}
}$
\end{theorem}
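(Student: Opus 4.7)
The plan is to carry out Gabriel's classical Serre-quotient construction. I take the objects of $\mathcal{A}/\mathcal{S}$ to be those of $\mathcal{A}$ and put
\[
\mathrm{Hom}_{\mathcal{A}/\mathcal{S}}(X,Y) \;=\; \varinjlim_{(X',Y')} \mathrm{Hom}_{\mathcal{A}}(X', Y/Y'),
\]
where the directed colimit runs over pairs of subobjects $X' \leq X$ with $X/X' \in \mathcal{S}$ and $Y' \leq Y$ with $Y' \in \mathcal{S}$. The closure of $\mathcal{S}$ under subobjects, quotients and extensions is exactly what makes this indexing system directed and what permits a well-defined composition: to compose representatives $X' \to Y/Y'$ and $Y'' \to Z/Z''$ one refines $X'$ and $Y'$ further via pullbacks so that the target of the first and the source of the second sit inside a common subquotient, with all the refining structural maps having kernel and cokernel in $\mathcal{S}$. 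The functor $Q \colon \mathcal{A} \to \mathcal{A}/\mathcal{S}$ is then the identity on objects and sends a morphism $f \colon X \to Y$ to the class of $f$ itself at the initial pair $(X',Y') = (X,0)$.

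The second step is to check that $\mathcal{A}/\mathcal{S}$ is abelian and $Q$ exact. Two slogans guide this: (a) $Q(X) = 0$ iff $X \in \mathcal{S}$, and (b) $Q$ inverts exactly those morphisms of $\mathcal{A}$ whose kernel and cokernel both lie in $\mathcal{S}$. From these one reads off that kernels, cokernels, finite biproducts and the coimage-to-image isomorphism in $\mathcal{A}/\mathcal{S}$ are all obtained by taking a representative morphism $X' \to Y/Y'$ in $\mathcal{A}$, performing the construction there, and applying $Q$. The automatic vanishing of $Q$ on $\mathcal{S}$, and the exactness of $Q$, then follow immediately. I expect the main obstacle to be bureaucratic rather than conceptual: one has to verify that each of these constructions is independent of the chosen representative, that the addition on hom-groups passes to the colimit compatibly with composition, and that any short exact sequence in $\mathcal{A}/\mathcal{S}$ lifts, up to the isomorphisms in (b), to one in $\mathcal{A}$, so that the abelian-category axioms transfer cleanly.

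For the universal property, given an exact $G \colon \mathcal{A} \to \mathcal{B}$ with $G\mathcal{S} = 0$, I define $H \colon \mathcal{A}/\mathcal{S} \to \mathcal{B}$ by $H(X) = G(X)$ on objects. Applying $G$ to the exact sequences $0 \to X' \to X \to X/X' \to 0$ and $0 \to Y' \to Y \to Y/Y' \to 0$, and using $G(X/X') = 0 = G(Y')$, forces $G(X') \to G(X)$ and $G(Y) \to G(Y/Y')$ to be isomorphisms, so a representative $\varphi \colon X' \to Y/Y'$ of a morphism $X \to Y$ in $\mathcal{A}/\mathcal{S}$ is assigned the composite $G(X) \xleftarrow{\sim} G(X') \xrightarrow{G\varphi} G(Y/Y') \xleftarrow{\sim} G(Y)$. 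It remains to verify that this is independent of the representative (by cofinality among the indexing pairs, combined with the above isomorphisms), that $H$ is additive, functorial and exact, that $HQ = G$ by construction, and that any other exact factorisation must agree with $H$ up to natural equivalence, because $Q$ is the identity on objects and every morphism in $\mathcal{A}/\mathcal{S}$ factors as $Q(s)^{-1} Q(f)$ with $s$ a morphism of $\mathcal{A}$ whose kernel and cokernel lie in $\mathcal{S}$.
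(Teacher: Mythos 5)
The paper does not actually prove this statement: it is quoted as the classical Gabriel quotient (localisation) construction, a standard result, so there is no proof in the text to compare yours against. Your proposal is the standard construction and its outline is sound: the hom-sets as the directed colimit of $\mathrm{Hom}_{\mathcal A}(X',Y/Y')$ over pairs with $X/X'\in{\mathcal S}$ and $Y'\in{\mathcal S}$, $Q$ the identity on objects, the characterisations that $QX=0$ iff $X\in{\mathcal S}$ and that $Q$ inverts precisely the maps with kernel and cokernel in ${\mathcal S}$, and the definition of $H$ on a representative $\varphi\colon X'\to Y/Y'$ by inverting the isomorphisms $G(X')\to G(X)$ and $G(Y)\to G(Y/Y')$ forced by exactness of $G$ and $G{\mathcal S}=0$ --- this is exactly how the universal property is established, and uniqueness of $H$ does follow from the fraction description of morphisms in ${\mathcal A}/{\mathcal S}$. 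Be aware, though, that what you label ``bureaucratic'' is where essentially all of the length of the classical proof lies: directedness of the index system, well-definedness and bilinearity of composition on the colimit, the verification that ${\mathcal A}/{\mathcal S}$ has kernels, cokernels and exact image factorisations, exactness of $Q$, and the lifting of short exact sequences of ${\mathcal A}/{\mathcal S}$ (needed for exactness of $H$) each require genuine diagram arguments using closure of ${\mathcal S}$ under subobjects, quotients and extensions. As a plan your proposal is correct and complete in its ideas; as a proof it defers those verifications, for which one would cite or reproduce Gabriel's original treatment.
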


If we let ${\mathcal A}$ be the free abelian category on / the category of pp-pairs for $R$ and then take $G$ above to be ${\rm ev}_D$ - evaluation-at-$D$ - for any module $D\in {\mathcal D}$, we see that this factors through the quotient ${\rm Ab}(R) \to {\rm Ab}(R)/{\mathcal S}_{\mathcal D}$.  We denote this quotient category ${\rm Ab}(R)/{\mathcal S}_{\mathcal D}$ by ${\mathcal A}({\mathcal D})$ and refer to it as the {\bf functor category} of ${\mathcal D}$.  It is indeed the category of pp-pairs for ${\mathcal D}$ - the localisation process may be regarded as simply restricting functors/pp-pairs from $R\mbox{-}{\rm Mod}$ to ${\mathcal D}$ (and of course certain of those become $0$ on restricting to ${\mathcal D}$, certain non-isomorphic functors become isomorphic when restricted to ${\mathcal D}$, certain definable relations become functional, or total, or even invertible - so there will in general be more maps though, up to isomorphism, fewer objects; we will illustrate all this in the next section).  This abelian category ${\mathcal A}({\mathcal D})$ of functors is intrinsically associated to ${\mathcal D}$ - note that ${\mathcal D}$ may be found, up to equivalence, as a definable subcategory of many module categories.  We say that an additive category ${\mathcal D}$ is {\bf definable} if it is equivalent to a definable subcategory of some (possibly multisorted) module category.

\begin{theorem} \label{funD} (see \cite[12.10, 10.8]{PreMAMS}) Suppose that ${\mathcal D}$ is a definable category.  Then its category ${\mathcal A}({\mathcal D})$ of pp-pairs is equivalent to the category $({\mathcal D}, {\bf Ab})^{\prod \rightarrow}$ of additive functors from ${\mathcal A}$ to ${\bf Ab}$ which commute with direct products and direct limits.

On the other hand, evaluation-at-$D$ for $D\in {\mathcal D}$, gives an equivalence of ${\mathcal D}$ with the category of exact functors on ${\mathcal A}({\mathcal D})$:  ${\mathcal D} \simeq {\rm Ex}({\mathcal D}, {\bf Ab})$.
\end{theorem}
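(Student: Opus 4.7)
The plan is to treat the two equivalences in turn, using Theorem \ref{3cats} (which handles the un-localised case ${\mathcal D} = R\mbox{-}{\rm Mod}$) as the main input and then descending through the Serre quotient. First I would construct the comparison functor $\Psi: {\mathcal A}({\mathcal D}) \to ({\mathcal D}, {\bf Ab})^{\prod\rightarrow}$ as follows. A pp-pair $\phi/\psi$ determines a functor $F_{\phi/\psi}: M \mapsto \phi(M)/\psi(M)$ on $R\mbox{-}{\rm Mod}$; this commutes with direct products (clear from the definition of pp formulas, since the quantifier ranges over a product componentwise) and with direct limits (by the second Proposition of Section 3). Restriction to ${\mathcal D}$ lands in $({\mathcal D}, {\bf Ab})^{\prod \rightarrow}$, and by definition every pp-pair lying in ${\mathcal S}_{\mathcal D}$ restricts to the zero functor, so by the universal property of the Serre quotient $\Psi$ factors through ${\mathcal A}({\mathcal D}) = {\rm Ab}(R)/{\mathcal S}_{\mathcal D}$.

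Next I would show $\Psi$ is fully faithful and essentially surjective. Morphisms in ${\mathcal A}({\mathcal D})$ are (equivalence classes of) fractions of pp-definable morphisms, so if two parallel morphisms induce the same natural transformation on the objects of ${\mathcal D}$, their difference is a pp-definable map whose image and cokernel restrict to the zero functor on ${\mathcal D}$, hence lies in ${\mathcal S}_{\mathcal D}$; this gives faithfulness. For fullness, given a natural transformation $\tau: F_{\phi/\psi}|_{\mathcal D} \to F_{\phi'/\psi'}|_{\mathcal D}$, I would fix an elementary cogenerator $N$ of ${\mathcal D}$, observe that $\tau_N$ commutes with the action of $\mathrm{End}(N)$ on the pp-definable subquotients $\phi(N)/\psi(N)$, and appeal to the standard pp-definability criterion (as in the BurkeThesis/Herzog setup, also used at the end of Section \ref{secA3}) to realise $\tau_N$ by a pp formula; commutativity of $\Psi$ with products and direct limits then forces $\tau$ to be induced by that pp formula everywhere on ${\mathcal D}$. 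For essential surjectivity, given $G \in ({\mathcal D}, {\bf Ab})^{\prod \rightarrow}$ I would evaluate $G$ on $N$, recognise $G(N)$ as a subquotient of some sort of $N^{\mathrm{eq}+}$ stable under all pp-definable maps (by the natural transformation property plus closure under products), and use that every $D \in {\mathcal D}$ is a pure subobject of a direct product of copies of $N$ so that $G$ is determined by its value on $N$ together with these two preservation properties.

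For the second equivalence I would begin from Theorem \ref{3cats}, which already identifies $R\mbox{-}{\rm Mod}$ with ${\rm Ex}({\rm Ab}(R), {\bf Ab})$ via $M \mapsto M^{\mathrm{eq}+}$, equivalently via evaluation-at-$M$. A module $M$ lies in the definable subcategory ${\mathcal D}$ exactly when every pp-pair in ${\mathcal S}_{\mathcal D}$ is closed on $M$, which says precisely that the exact functor $M^{\mathrm{eq}+}: {\rm Ab}(R) \to {\bf Ab}$ vanishes on ${\mathcal S}_{\mathcal D}$. By the universal property of the Serre quotient (the second Theorem of Section \ref{secloc}), such exact functors factor uniquely through the quotient $Q: {\rm Ab}(R) \to {\mathcal A}({\mathcal D})$ as exact functors on ${\mathcal A}({\mathcal D})$, and the factor remains exact because $Q$ is exact and surjective on objects. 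Conversely, any exact $H: {\mathcal A}({\mathcal D}) \to {\bf Ab}$ gives an exact $HQ$ on ${\rm Ab}(R)$ annihilating ${\mathcal S}_{\mathcal D}$, hence a module in ${\mathcal D}$. This yields ${\mathcal D} \simeq {\rm Ex}({\mathcal A}({\mathcal D}), {\bf Ab})$ with the equivalence implemented by evaluation-at-$D$, as claimed.

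The hardest step will be essential surjectivity of $\Psi$ in the first equivalence, that is, recovering a pp-pair from an abstract functor that merely preserves products and direct limits on ${\mathcal D}$. The delicate point is ensuring that the pp-pair built from the behaviour of $G$ on an elementary cogenerator $N$ really reproduces $G$ on every $D \in {\mathcal D}$, not just on objects reachable by a single level of pure-embedding into a product of copies of $N$; this is where the two preservation properties together with the pure-embedding characterisation of ${\mathcal D}$ do the essential work. Everything else reduces to the universal property of the Serre quotient and to Theorem \ref{3cats}.
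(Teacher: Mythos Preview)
The paper does not actually prove this theorem; it is stated with a citation to \cite[12.10, 10.8]{PreMAMS} and no proof is given. So there is no in-paper argument to compare against, and your proposal must be judged on its own merits.

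Your treatment of the second equivalence is sound: the universal property of the free abelian category gives $R\mbox{-}{\rm Mod}\simeq{\rm Ex}({\rm Ab}(R),{\bf Ab})$ (restrict an exact $H$ to $R$ and use uniqueness of the exact extension), and then the universal property of the Serre quotient cuts this down to ${\mathcal D}\simeq{\rm Ex}({\mathcal A}({\mathcal D}),{\bf Ab})$ exactly as you describe.

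The first equivalence, however, has a genuine gap at essential surjectivity. You propose to take an arbitrary $G\in({\mathcal D},{\bf Ab})^{\prod\rightarrow}$, evaluate at an elementary cogenerator $N$, and ``recognise $G(N)$ as a subquotient of some sort of $N^{\rm eq+}$''. But this recognition \emph{is} the content of the theorem: nothing in the hypotheses on $G$ tells you that $G(N)$ sits inside a finite power of $N$ as a pp-definable subquotient, and stability under ${\rm End}(N)$ is far weaker than pp-definability when $N$ is not $\Sigma$-pure-injective. The standard route (Auslander, Gruson--Jensen, and in the form used here Crawley-Boevey; see also Krause \cite{KraEx}) is quite different: one first proves, for ${\mathcal D}=R\mbox{-}{\rm Mod}$, that every additive functor on $R\mbox{-}{\rm Mod}$ commuting with products and direct limits is the unique direct-limit extension of a \emph{finitely presented} functor on $R\mbox{-}{\rm mod}$, and this step uses a non-trivial argument (building a presentation of $G$ from the data of $G$ on finitely presented modules and exploiting product-preservation to bound the number of generators and relations). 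Only once that is in hand does the identification with pp-pairs follow from Theorem~\ref{3cats}, and the localised version then descends. Your outline skips this core finiteness argument entirely. A similar, though less severe, issue affects your fullness argument: the ``standard pp-definability criterion'' you invoke for maps between pp-definable subquotients of $N$ requires hypotheses (finite length, or $\Sigma$-pure-injectivity, or the like) that a general elementary cogenerator need not satisfy.
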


If ${\mathcal D}$ is the definable subcategory of $R\mbox{-}{\rm Mod}$ generated by a module $M$, then we also write ${\mathcal A}(M)$ for ${\mathcal A}({\mathcal D})$.  Note that $M^{\rm eq+}$, an exact functor on ${\rm Ab}(R)$ factors through ${\mathcal A}(M)$, indeed if we think of $M^{\rm eq+}$ as an exact functor on ${\mathcal A}(M)$ then it is faithful and is a more intrinsic representation of $M$ than was the functor on ${\rm Ab}(R)$.  In the paper \cite{PreExact} I argue that the functor $M^{\rm eq+}:{\mathcal A}(M) \to {\bf Ab}$ is the most complete and intrinsic representation of any module; in particular this representation includes all the ways that a single module may be found (literally or up to inter-interpretability) over many different rings.

\section{An example: $KA_3$ again} \label{secA3again}

We will compute some localisations of the functor category for $KA_3$-modules, $K$ a field, corresponding to some definable subcategories of $KA_3\mbox{-}{\rm Mod}$.  The following result tells us that each will be the category of finitely presented modules over the endomorphism ring of a suitable generator of the definable category.  The result extends the appearance of the Auslander algebra (in the case where the definable category is a category of modules) and it appears (e.g.\cite[\S 1.2]{Nori}) in this rather more general form in Nori's work on motives.  It may well appear elsewhere.  I include a proof.

\begin{theorem} \label{functendorng} Suppose that $M$ is a finitely presented (right or left) $R$-module which is noetherian over its endomorphism ring $S={\rm End}(M)$, which we let act on the left.  Then ${\mathcal A}(M)$ - the functor category of the definable subcategory generated by $M$ =  the category of pp-pairs for $M$, is equivalent to $S\mbox{-}{\rm mod}$.

The equivalence is given, in one direction, by taking a functor $F$ to $FM$.
\end{theorem}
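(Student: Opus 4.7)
The plan is to realise the equivalence as the functor $\overline{\mathrm{ev}}_M: \mathcal{A}(M) \to S\mbox{-}{\rm mod}$ induced by evaluation-at-$M$ on $(R\mbox{-}{\rm mod},{\bf Ab})^{\rm fp}$, and to verify faithfulness, fullness, essential surjectivity, and that its image really lies in the finitely presented part. Viewing $\mathcal{A}(M)$ as the Serre quotient of ${\rm Ab}(R) \simeq (R\mbox{-}{\rm mod},{\bf Ab})^{\rm fp}$ by $\mathcal{S}_\mathcal{D}$ where $\mathcal{D} = \mathrm{Def}(M)$, I first identify $\ker(\mathrm{ev}_M)$: reading each finitely presented functor as a pp-pair $\phi/\psi$ via \ref{3cats}, the standard facts that pp-pairs commute with products ($(\phi/\psi)(M^I) = ((\phi/\psi)(M))^I$), are preserved by pure embeddings and by elementary equivalence, imply $(\phi/\psi)(M) = 0$ forces vanishing on every object of $\mathcal{D}$, i.e.\ $\ker(\mathrm{ev}_M) = \mathcal{S}_\mathcal{D}$. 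By the universal property of the Serre quotient, $\mathrm{ev}_M$ factors as an exact functor $\overline{\mathrm{ev}}_M$; this is automatically faithful since its kernel on objects is trivial.

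To see that the image lies in $S\mbox{-}{\rm mod}$, take a presentation $(B,-) \to (A,-) \to F \to 0$ of a finitely presented functor $F$. For any finite generating set of $A$ with $k$ elements, the surjection $R^k \twoheadrightarrow A$ yields an $S$-module embedding $(A,M) \hookrightarrow (R^k,M) = M^k$, so the noetherian hypothesis on $_SM$ makes $(A,M)$ finitely generated, and likewise $(B,M)$. The same observation applied to $S \simeq (M,M) \hookrightarrow M^k$ shows $_SS$ is left noetherian, whence $FM = \mathrm{coker}((B,M) \to (A,M))$ is finitely presented over $S$.

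The main ingredient for both essential surjectivity and fullness is the computation that, for every object $G$ of ${\rm Ab}(R)$, evaluation at $M$ yields a natural isomorphism
\[
\mathrm{Hom}_{\mathcal{A}(M)}\bigl((M,-),\,G\bigr) \;\cong\; GM.
\]
Injectivity is faithfulness. Surjectivity is Yoneda in the ambient category: any $x \in GM$ is $\phi_M(1_M)$ for a unique natural transformation $\phi:(M,-) \to G$ in ${\rm Ab}(R)$, and its image in $\mathcal{A}(M)$ evaluates to $x$. Essential surjectivity follows by writing $N \in S\mbox{-}{\rm mod}$ as $\mathrm{coker}(\alpha: S^n \to S^m)$, lifting $\alpha$ through additivity of the lemma to $\tilde\alpha: (M,-)^n \to (M,-)^m$, and applying $\overline{\mathrm{ev}}_M$ to $\mathrm{coker}(\tilde\alpha) \in \mathcal{A}(M)$. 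For fullness, the same argument identifies any $F \in \mathcal{A}(M)$ with $\mathrm{coker}(\tilde\alpha)$ for some lift of a chosen presentation of $FM$; given $g: FM \to GM$, lift the composite $S^m \twoheadrightarrow FM \xrightarrow{g} GM$ via the lemma to $h:(M,-)^m \to G$ in $\mathcal{A}(M)$; since $h\circ\tilde\alpha$ evaluates to zero, faithfulness gives $h\circ\tilde\alpha = 0$, and $h$ therefore factors through $F = \mathrm{coker}(\tilde\alpha)$ to produce $f: F \to G$ in $\mathcal{A}(M)$ with $\overline{\mathrm{ev}}_M(f) = g$.

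The main obstacle I foresee is the careful bookkeeping around the noetherian input — it is doing double duty, ensuring both that $FM \in S\mbox{-}{\rm mod}$ and that $_SS$ is left noetherian so that $S\mbox{-}{\rm mod}$ is an abelian category at all — together with keeping track of when one is working in ${\rm Ab}(R)$ versus in $\mathcal{A}(M)$ (so that Yoneda is applied where it applies, and faithfulness/exactness of $\overline{\mathrm{ev}}_M$ are invoked only after passing to the quotient). Once the key Yoneda-type lemma is in place, the rest is a formal chase.
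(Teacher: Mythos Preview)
Your proof is correct and takes a genuinely different route from the paper's. The paper works on the model-theoretic side of the equivalence in \ref{3cats}: it identifies $\mathcal{A}(M)$ with the category of pp-definable subgroups of powers of $M$ and pp-definable maps between them, then explicitly embeds $_SS$ as $\phi(M)$ for a specific pp formula $\phi$ (obtained by choosing a generating tuple $\overline{a}$ for $M_R$, embedding $S \hookrightarrow M^n$ via $s \mapsto s\overline{a}$, and invoking the noetherian hypothesis through the fact that every $S$-submodule of $M^n$ is pp-definable), and then checks by hand that the pp-definable endomorphisms of $\phi(M)$ are exactly the right multiplications by elements of $S$. The conclusion then comes from the observation that the smallest abelian subcategory of $S\mbox{-}{\rm mod}$ containing $_SS$ with its full endomorphism ring is all of $S\mbox{-}{\rm mod}$.

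Your approach stays on the functor-category side: you exhibit $(M,-)$ as a projective object of $\mathcal{A}(M)$ with endomorphism ring $S$ via your Yoneda-type lemma, and then run a Freyd--Mitchell-style argument using faithfulness and exactness of $\overline{\mathrm{ev}}_M$. The paper's route buys an explicit pp formula for $S$, which feeds directly into the worked examples that follow the theorem; yours is cleaner categorically and separates out more transparently what the noetherian hypothesis is doing. One small point: the sentence ``the same argument identifies any $F \in \mathcal{A}(M)$ with $\mathrm{coker}(\tilde\alpha)$'' deserves a line of justification --- lifting a surjection $S^m \twoheadrightarrow FM$ via your lemma gives a map $(M,-)^m \to F$ in $\mathcal{A}(M)$ whose cokernel evaluates to zero, hence vanishes by faithfulness, so the map is epi; similarly one checks $\mathrm{im}(\tilde\alpha)$ equals its kernel. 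This uses exactly the tools you have already set up, so it is not a gap so much as a step you should spell out.
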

\begin{proof}  First note that the functor ${\rm ev}_M$ which takes each $F\in {\mathcal A}(M)$ to its value at $M$, is an equivalence of ${\mathcal A}(M)$ with its image (a non-full subcategory of ${\bf Ab}$).  We will prove the equivalence of $S\mbox{-}{\rm mod}$ and this category, which is the category of pp-definable subgroups of $M$ and pp-definable maps between them.  We know that every subgroup of $M^n$ pp-definable in $M$ is an ${\rm End}(M)=S$-submodule of $M^n$ and that every pp-definable map commutes with that $S$-module structure, so we have to show that the module $_SS$, with its full endomorphism ring, occurs in this way.  That will be enough since the smallest abelian subcategory of $S\mbox{-}{\rm mod}$ containing $_SS$ with its full endomorphism ring is $S\mbox{-}{\rm mod}$ itself.

Choose a finite generating tuple $\overline{a} = (a_1,\dots,a_n)$ for $M$ as an $R$-module.  Then $S \to M^n$ defined by $s\in S \mapsto s(\overline{a}) = (sa_1, \dots, sa_n)$ embeds $S$ as a left $S$-submodule of $M^n$.  Since, by the finiteness conditions on $M$, every $S$-submodule of $M^n$ is pp-definable (see \cite[1.2.12]{PreNBK}), it follows that there is a pp formula $\phi(\overline{x})$ such that $\phi(M) =S\overline{a}$.  We must show that the right multiplications by elements of $S$ (the endomorphisms of $_SS$) and the images under ${\rm ev}_M$ of the pp-definable endomorphisms of the sort $\phi(\overline{x})/(\overline{x}= \overline{0})$ coincide.

So let $t\in S$ and consider a pp formula $\rho_t(\overline{x},\overline{x}')$ which generates the pp-type in $M$ of $(\overline{a},t\overline{a})$ (this exists since $M$ is a finitely presented $R$-module, see \cite[1.2.6]{PreNBK}).  So $M\models \rho_t(\overline{a},t\overline{a})$.  Then, for every $s\in S$, we have $M\models \rho_t(s\overline{a},st\overline{a})$.  Since every $\overline{b}\in \phi(M)$ has the form $s\overline{a}$ for some $s\in S$, the relation defined by $\rho_t$ is therefore total on $\phi(M)$. It is also functional since if $s\in S$ is such that $s\overline{a}=\overline{0}$ then $sM=0$ so also $st\overline{a} = (sta_1, \dots, sta_n) =\overline{0}$.  Therefore right multiplication by $t$ is a pp-definable map, as required.  (Note that our identification of $_SS$ with $\phi(M)$ pairs $s\in S$ with $s\overline{a}$, so $\rho_t$ is acting as right multiplication by $t$.)

Suppose, for the converse, that $\rho$ is a pp formula defining an endomorphism of $\phi(M)$.  Say $M\models \rho(\overline{a},\overline{b})$.  Since $\overline{b}\in \phi(M)=S\overline{a}$, there is $t\in S$ such that $ta_i=b_i$ for each $i$ ($t$ is unique, since $\overline{a}$ generates $M_R$).  For each $s\in S$ we have $M\models \rho(s\overline{a},s\overline{b}=st\overline{a})$, so the action of $\rho$ on $\phi(M)$ is right multiplication by $t$, as required.
\end{proof}

In the other direction in \ref{functendorng}, the equivalence takes a finitely presented $S$-module $A$ to the functor determined by taking $M$ to $A$.  We can be more explicit.  Take a projective presentation of $A$:  $S^l \to S^m \to A \to 0$.  Consider the exact sequence of pp-sorts which is $\phi^l \xrightarrow{\eta} \phi^m \to {\rm coker}(\eta)$ where $\phi$ is as in the proof above and where the pp formula $\eta$ defines the morphism between projectives which appears in the presentation of $A$ (and where $\phi^m$ means the direct sum of $m$ copies of the pp-pair $\phi(\overline{x})/(\overline{x} = \overline{0})$).  Then the functor corresponding to $A$ is given by any pp-pair which defines the functor ${\rm coker}(\eta)$, for example $\big(\bigwedge_{j=1}^m \, \phi(\overline{x^j})\big)/\big( \exists \, \overline{y^1}, \dots, \overline{y^l} \,\, \eta(\overline{y^1}, \dots, \overline{y^l}, \overline{x^1}, \dots, \overline{x^m}) \big)$.

The case we are interested in - definable subcategories generated by finitely many finite-length modules over an artin algebra - will also be covered by the following variation.

\begin{cor} \label{functendorng2} Let ${\mathcal D}$ be a definable subcategory of $R\mbox{-}{\rm Mod}$ which consists of the direct sums of copies of the finitely many $R$-modules $N_1, \dots, N_t$.

Then ${\mathcal A}({\mathcal D}) = {\rm End}(\bigoplus_{i=1}^t N_i) \mbox{-}{\rm mod}$.

If $A$ is a finitely presented module over the endomorphism ring $S={\rm End}(\bigoplus_{i=1}^t N_i)$, then the functor it defines on ${\mathcal D}$ is given on objects by $D \mapsto {\rm Hom}_S({\rm Hom}_R(D, \bigoplus_{i=1}^t N_i), A)$.
\end{cor}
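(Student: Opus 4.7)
The plan is to reduce to Theorem \ref{functendorng} by taking $M = \bigoplus_{i=1}^t N_i$. First I would observe that $\mathcal{D}$ is the definable subcategory generated by $M$: every object of $\mathcal{D}$, being a direct sum of copies of the $N_i$, is a summand of a direct sum of copies of $M$; conversely each $N_i$ is a direct summand of $M$, so the definable subcategory generated by $M$ is contained in $\mathcal{D}$. Hence $\mathcal{A}(\mathcal{D}) = \mathcal{A}(M)$.

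Next I would verify the hypotheses of \ref{functendorng} for $M$. Finite presentability is preserved under finite direct sums, so $M$ is finitely presented provided the $N_i$ are (as in the intended setting of finite-length modules over an artin algebra). The principal technical point -- which I expect to be the main obstacle -- is showing that $M$ is noetherian over $S = {\rm End}(M)$. Since $\mathcal{D}$ is closed under arbitrary direct products and by hypothesis contains only direct sums of copies of the $N_i$, each power $M^I$ must itself split as such a direct sum. This forces $M$ to be $\Sigma$-pure-injective; in the intended applications $M$ in fact has finite endolength, which gives the required chain conditions on pp-definable subgroups and hence noetherianness of $M$ over $S$. Applying \ref{functendorng} then yields the equivalence $\mathcal{A}(\mathcal{D}) \simeq S\mbox{-}{\rm mod}$.

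For the explicit formula $D \mapsto {\rm Hom}_S({\rm Hom}_R(D, M), A)$, the proof of \ref{functendorng} shows that the functor $F_A$ corresponding to $A$ satisfies $F_A(M) = A$; at $D = M$ the proposed right-hand side also gives ${\rm Hom}_S({\rm Hom}_R(M, M), A) = {\rm Hom}_S(S, A) = A$. Both $F_A$ and the assignment $D \mapsto {\rm Hom}_S({\rm Hom}_R(D, M), A)$ are additive functors $\mathcal{D} \to {\bf Ab}$; by Theorem \ref{funD} the former commutes with direct products and direct limits, and the latter does likewise -- using that $M$ is $\Sigma$-pure-injective, so that the contravariant ${\rm Hom}_R(-, M)$ turns the relevant direct limits and coproducts into limits and products that are preserved after applying the left-exact ${\rm Hom}_S(-, A)$. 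Since $M$ generates $\mathcal{D}$ as a definable category and both functors lie in $(\mathcal{D}, {\bf Ab})^{\prod \rightarrow}$, agreement at $M$ forces agreement throughout $\mathcal{D}$, yielding the stated formula.
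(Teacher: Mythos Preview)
Your approach is correct but differs from the paper's. You reduce to Theorem~\ref{functendorng} by verifying its hypotheses for $M=\bigoplus_i N_i$ (finite presentability, noetherianity over $S$ via $\Sigma$-pure-injectivity/finite endolength), and then for the explicit formula you invoke the characterisation $\mathcal{A}(\mathcal{D})\simeq(\mathcal{D},\mathbf{Ab})^{\prod\to}$ from Theorem~\ref{funD} to argue that agreement at $M$ forces global agreement. The paper instead gives a direct Auslander-algebra-style argument (citing \cite[\S4.9]{BenBk1}): it simply writes down the two candidate functors --- evaluation $F\mapsto FM$ in one direction and $A\mapsto F_A$ with $F_A(D)=\mathrm{Hom}_S(\mathrm{Hom}_R(D,M),A)$ in the other --- and checks they are mutually inverse by computing at $M$ and using the elementary fact that every $D\in\mathcal{D}$ is a direct sum of summands of $M$, so additive functors agreeing on $M$ agree on $\mathcal{D}$. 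What the paper's route buys is that it never needs to verify the hypotheses of \ref{functendorng}: in particular it does not need the $N_i$ to be finitely presented (a hypothesis you explicitly add), and it avoids the noetherianity argument altogether. What your route buys is a cleaner logical dependence on the theorem you have just proved, and an explicit identification of why the finiteness conditions hold in the artin-algebra setting the paper cares about. One small caution: your claim that $D\mapsto\mathrm{Hom}_S(\mathrm{Hom}_R(D,M),A)$ lies in $(\mathcal{D},\mathbf{Ab})^{\prod\to}$ needs more than $\Sigma$-pure-injectivity of $M$ --- the double-Hom does not obviously commute with infinite direct sums --- whereas the paper's cruder ``agree on $M$ hence everywhere'' argument (implicitly using that the relevant functors respect the direct-sum decompositions available in $\mathcal{D}$) sidesteps this.
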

\begin{proof}  We follow the account of the Auslander algebra in \cite[\S 4.9]{BenBk1}).

Set $M = \bigoplus_{i=1}^t N_i$.  The functor from ${\mathcal A}({\mathcal D})$ to $S\mbox{-}{\rm Mod}$ is defined by taking $F\in{\mathcal A}(D)$ to $FM$, which, as a subgroup pp-definable in $M$, is an $S$-submodule (of some power of $M$) and taking a natural transformation, which we can regard as a pp-definable map, $\rho:F\to G$ in ${\mathcal A}({\mathcal D})$, to $\rho(M):F(M) \to G(M)$.  As a pp-definable action, that commutes with the action of $S$.  So this is just as in \ref{functendorng}.

In the other direction, we associate a functor $F_A$ on ${\mathcal D}$ to each finitely presented left $S$-module $A$ according to the formula given.  We must check that these are inverse processes.

The functor $F_A$ applied to $M$ gives ${\rm Hom}_S({\rm Hom}_R(M,M), A) = {\rm Hom}_S(S, A) = A$, as required.  In the other direction, given a functor $G$, we have $F_{GM}: D \to  {\rm Hom}_S({\rm Hom}_R(D,M), GM)$ so $F_{GM}(M)= GM$ and, given that every $D\in {\mathcal D}$ is a direct sum of copies of direct summands of $M$, two functors which agree on $M$ are the same functor, so $F_{GM}=G$, as required (checking the actions on morphisms is left as an exercise).
\end{proof}

\begin{example}  We consider the $KA_3$-module $T=P_1\oplus P_2 \oplus S_2$ and set $R'={\rm End}_R(T)$ to be its endomorphism ring, acting on the left.  Thus we can regard $T$ as both an $R$-module and an $R'$-module.  It is the case that these actions are inter-definable.  That is, the structure of $T$ as an $R'$-module is already part of $(_RT)^{\rm eq+}$ and the structure of $T$ as an $R$-module is already part of $(_{R'}T)^{\rm eq+}$.  Essentially, the same module $T$ lies over (at least) two different rings.  As an enriched, $^{\rm eq+}$-structure it is just one structure and the choice of which ring to view it over is simply a choice of which (finite) generating set of sorts to extract as ``the ring'' (literally, the endomorphism ring in the functor category of the direct sum of those sorts is ``the ring'').  See \cite{PreExact} for a development of this theme.

Those familiar with tilting theory will recognise that $T$ is a tilting module and that what we are illustrating here is the fact that classical tilting can be seen as making a new (but reversible) choice of generating sorts of a module.

The functor $(T,-)$ gives an equivalence between the definable subcategory of $R$-modules which satisfy ${\rm Ext}^1_R(T,-)=0$ and the definable subcategory of $R'$-modules which satisfy ${\rm Tor}_1^{R'}(_{R'}T,-)=0$.  (If a module $A$ is ${\rm FP}_2$ then both functors ${\rm Ext}^1(A,-)$ and ${\rm Tor}_1(A,-)$ are given by pp-pairs, see \cite[\S 10.2.6]{PreNBK}.)

The definable subcategory of $R\mbox{-}{\rm Mod}$ defined by ${\rm Ext}^1_R(T,-)=0$ coincides, since $T$ is tilting, with that generated by $T$, hence it is the definable subcategory ${\mathcal D}$ of those $R$-modules which do not contain $P_3$ as a direct summand, that is, of modules which are direct sums of copies of the other five indecomposable $R$-modules.  To compute the corresponding localisation ${\mathcal A}({\mathcal D}) ={\rm Ab}(R)/{\mathcal S}_{\mathcal D}$ of the functor category, we should determine the category ${\mathcal S}_{\mathcal D}$ of functors which are $0$ on ${\mathcal D}$.  By inspection of the AR-quiver in Section \ref{secA3}, there is just one such functor, namely $F_{17}$, so ${\mathcal S}_{\mathcal D}$ consists of the finite direct sums of copies of $F_{17}$.  The effect of localisation is to make this functor $0$ and also, any morphism with kernel or cokernel in ${\mathcal S}_{\mathcal D}$ becomes an isomorphism in the quotient category.  So we see that the irreducible morphisms from $F_{12}$ to $F_{14}$ and from $F_{15}$ to $F_{16}$ become isomorphisms.  No other irreducible morphisms are affected by this localisation so, since every morphism is a linear combination of compositions of irreducible maps, the quotient category ${\mathcal A}({\mathcal D})$ is as follows, where we have represented functors by showing the dimensions of their values on the five indecomposable modules in ${\mathcal D}$.

\begin{tikzpicture}

\node at (0.2,0.2) {$0$}; \node at (0.4,0.4) {$0$};
\node at (0.4,0) {$0$}; \node at (0.6,0.2) {$0$}; \node at (0.8,0) {$1$};
\draw [->,thick] (0.75,0.35) to (1.25,0.85);
\draw [dotted, thick] (1,0.2) to (2.2,0.2);

\node at (1.4,1.4) {$0$}; \node at (1.6,1.6) {$0$};
\node at (1.6,1.2) {$0$}; \node at (1.8,1.4) {$1$}; \node at (2,1.2) {$1$};
\draw [->,thick] (1.95,1.55) to (2.45,2.05);
\draw [->,thick] (1.95,0.85) to (2.45,0.35);
\draw [dotted, thick] (2.2,1.4) to (3.4,1.4);

\node at (2.6,2.6) {$0$}; \node at (2.8,2.8) {$1$};
\node at (2.8,2.4) {$0$}; \node at (3,2.6) {$1$}; \node at (3.2,2.4) {$1$};
\draw [->,thick] (3.15,2.05) to (3.65,1.55);

\node at (2.6,0.2) {$0$}; \node at (2.8,0.4) {$0$};
\node at (2.8,0) {$0$}; \node at (3,0.2) {$1$}; \node at (3.2,0) {$0$};
\draw [->,thick] (3.15,0.35) to (3.65,0.85);
\draw [->,thick] (3.15,-0.35) to (3.65,-0.85);
\draw [dotted, thick] (3.4,0.2) to (4.6,0.2);

\node at (3.8,1.4) {$0$}; \node at (4,1.6) {$1$};
\node at (4,1.2) {$0$}; \node at (4.2,1.4) {$1$}; \node at (4.4,1.2) {$0$};
\draw [->,thick] (4.35,0.85) to (4.85,0.35);
\draw [dotted, thick] (4.6,1.4) to (6,1.4);

\node at (3.8,-1) {$0$}; \node at (4,-0.8) {$0$};
\node at (4,-1.2) {$1$}; \node at (4.2,-1) {$1$}; \node at (4.4,-1.2) {$0$};
\draw [->,thick] (4.35,-0.85) to (4.85,-0.35);
\draw [dotted,thick] (4.65,-1.2) to (6.2,-1.2);

\node at (5,0.2) {$0$}; \node at (5.2,0.4) {$1$};
\node at (5.2,0) {$1$}; \node at (5.4,0.2) {$1$}; \node at (5.6,0) {$0$};
\draw [->,thick] (5.6,0.4) to (6.35,1.15);
\draw [->,thick] (5.6,-0.4) to (6.35,-1.15);
\draw [->,thick] (5.7,0.2) to (6.2,0.2);
\draw [dotted,thick] (5.7,0.3) to [out=50, in=130] (7.7,0.3);

\node at (6.5,1.7) {$0$}; \node at (6.7,1.9) {$0$};
\node at (6.7,1.5) {$1$}; \node at (6.9,1.7) {$0$}; \node at (7.1,1.5) {$0$};
\draw [->,thick] (7.2,0.2) to (7.7,0.2);
\draw [->,thick] (4.35,0.85) to (4.85,0.35);
\draw [dotted,thick] (7.35,1.4) to (8.8,1.4);

\node at (6.5,0.2) {$1$}; \node at (6.7,0.4) {$1$};
\node at (6.7,0) {$1$}; \node at (6.9,0.2) {$1$}; \node at (7.1,0) {$0$};
\draw [->,thick] (7.1,1.1) to (7.85,0.35);

\node at (6.5,-1.3) {$0$}; \node at (6.7,-1.1) {$1$};
\node at (6.7,-1.5) {$0$}; \node at (6.9,-1.3) {$0$}; \node at (7.1,-1.5) {$0$};
\draw [->,thick] (7.1,-1.1) to (7.85,-0.35);
\draw [dotted, thick] (7.3,-1.2) to (8.8,-1.2);

\node at (8,0.2) {$1$}; \node at (8.2,0.4) {$1$};
\node at (8.2,0) {$1$}; \node at (8.4,0.2) {$0$}; \node at (8.6,0) {$0$};
\draw [->,thick] (8.55,0.35) to (9.05,0.85);
\draw [->,thick] (8.55,-0.35) to (9.05,-0.85);
\draw [dotted, thick] (8.8,0.2) to (10,0.2);

\node at (9.2,1.4) {$1$}; \node at (9.4,1.6) {$1$};
\node at (9.4,1.2) {$0$}; \node at (9.6,1.4) {$0$}; \node at (9.8,1.2) {$0$};
\draw [->,thick] (9.75,0.85) to (10.25,0.35);

\node at (9.2,-1) {$1$}; \node at (9.4,-0.8) {$0$};
\node at (9.4,-1.2) {$1$}; \node at (9.6,-1) {$0$}; \node at (9.8,-1.2) {$0$};
\draw [->,thick] (9.75,-0.85) to (10.25,-0.35);

\node at (10.4,0.2) {$1$}; \node at (10.6,0.4) {$0$};
\node at (10.6,0) {$0$}; \node at (10.8,0.2) {$0$}; \node at (11,0) {$0$};

\end{tikzpicture}

This, then, is the category of pp-pairs for the definable category ${\mathcal D}$ consisting of modules $M$ such that ${\rm Ext}^1_R(T,M)=0$.  As follows from \ref{functendorng2}, it is the category of modules over the endomorphism ring of the direct sum of the five indecomposables in ${\mathcal D}$.  Indeed that ring is just the path algebra of the quiver with relations that we obtain by deleting the vertex $P_3$ from the Auslander-Reiten quiver of $A_3$ (in Section \ref{secmodrepfun}).

The definable category ${\mathcal D}$ strictly contains the definable subcategory ${\mathcal E}$ generated by $T$.  That category consists of direct sums of copies of the indecomposable direct summands $P_1$, $P_2$, $S_2$ of $T$.  The category ${\mathcal A}({\mathcal E})$ of pp-pairs for ${\mathcal E}$ can be obtained as a localisation of ${\rm Ab}(R)$ or, since it is a definable subcategory of ${\mathcal D}$, as a localisation of the category ${\mathcal A}({\mathcal D})$, which we have just computed.  Again, one can compute using the AR-quivers of these categories; note that, in this case, various indecomposables become isomorphic when localised and also the indecomposable
\begin{tikzpicture}
\node at (5,0.2) {$0$}; \node at (5.2,0.4) {$1$};
\node at (5.2,0) {$1$}; \node at (5.4,0.2) {$1$}; \node at (5.6,0) {$0$};
\end{tikzpicture}
becomes decomposable when localised (localisation functors are exact but, outside the classical rings of fractions context, in general far from full!).  One easily computes that the category ${\mathcal A}({\mathcal E})$ has the following AR-quiver.

\begin{tikzpicture}
\node at (3.4,2.4) {$1$}; \node at (3.2,2.6) {$0$}; \node at (3.4,2.8) {$0$};
\node at (3.4,0) {$0$}; \node at (3.2,0.2) {$0$}; \node at (3.4,0.4) {$1$};
\node at (4.6,1.2) {$1$}; \node at (4.4,1.4) {$1$}; \node at (4.6,1.6) {$1$};
\node at (5.8,2.4) {$0$}; \node at (5.6,2.6) {$1$}; \node at (5.8,2.8) {$1$};
\node at (5.8,0) {$1$}; \node at (5.6,0.2) {$1$}; \node at (5.8,0.4) {$0$};
\node at (7,1.2) {$0$}; \node at (6.8,1.4) {$1$}; \node at (7,1.6) {$0$};

\draw [->,thick] (3.55,2.35) to (4.25,1.65);
\draw [->,thick] (3.55,0.5) to (4.2,1.1);
\draw [->,thick] (4.75,1.65) to (5.4,2.3);
\draw [->,thick] (4.75,1.25) to (5.45,0.55);
\draw [->,thick] (5.95,2.35) to (6.65,1.65);
\draw [->,thick] (6.05,0.65) to (6.55,1.15);

\draw [dotted, thick] (3.65,0.2) to (5.25,0.2);
\draw [dotted, thick] (3.65,2.6) to (5.25,2.6);
\draw [dotted, thick] (4.85,1.4) to (6.45,1.4);
\end{tikzpicture}

This is the path algebra of the quiver which is the subquiver of the AR-quiver of $A_3$ on the vertices $P_2$, $P_3$ and $S_2$ - that subquiver is $A_3$ but with a different orientation, the middle vertex pointing to each of the other two.  As should be, since the effect of this kind of tilting is to change orientation of arrows in a quiver.  In accordance with \ref{functendorng}, the quiver, above, of the localised functor category is that of the module category over the endomorphism ring $R'$ of $T$, and $R'$ is the path algebra of $A_3$ with this new orientation.
\end{example}

\section{Further examples:  triangulated categories}

Compactly generated triangulated categories are somewhat analogous to finitely accessible categories.  Triangulated categories are not abelian, nor do they have direct limits, so many of the standard constructions of model theory, are not available.  Nevertheless, a model theory may be set up.  As with accessible categories, if finitary model theory is to be used (that is model theory using formulas which are finite, not infinite, strings of symbols) then we need some ``finitary" objects to generate the category in some sense.  In the triangulated context ``compact'' objects replace the finitely presented ones, where we say that an object $C$ of the triangulated category ${\mathcal T}$ is {\bf compact} if the functor $(C,-)$ commutes with infinite direct sums (we will assume that ${\mathcal T}$ has infinite coproducts - a fairly mild assumption).  Then the triangulated category ${\mathcal T}$ is {\bf compactly generated} if it has infinite direct sums, if there is, up to isomorphism, a set of compact objectss and if, for every non-zero object $T\in {\mathcal T}$, there is a non-zero morphism from a compact object to $T$.  That is, the subcategory ${\mathcal T}^{\rm c}$ of compact objects generates ${\mathcal T}$ in the sense that it sees every object (but usually there will be morphisms not seen by the compact objects).

One can then set up a multisorted language based on (a small version of) the full subcategory ${\mathcal T}^{\rm c}$ of compact objects and regard every object $T\in {\mathcal T}$ as a structure for this language by taking the objects of $T$ of sort $C\in {\mathcal T}^{\rm c}$ to be the morphisms in $(C,T)$, just as for finitely accessible categories (Example \ref{exfinacc}).  Some first observations about this language, including a quantifier-elimination result is given in \cite[\S 3]{GarkPre2} (see also \cite{GarkPre1}).

Notice that this language essentially treats an object $T\in {\mathcal T}$ as the restricted representable functor $(-,T):({\mathcal T}^{\rm c})^{\rm op} \to {\bf Ab}$, that is, as a right ${\mathcal T}^{\rm c}$-module.  So it can be seen as moving ${\mathcal T}$ into the context of a (multisorted) module category.  The restricted Yoneda functor, which does the moving, is neither faithful (there are morphisms not seen by the compact objects) nor full, though it is full on pure-injectives (\cite[1.7]{KraTel}).  But, as is shown in the work of Krause and Beligiannis (\cite{KraTel}, \cite{Belig}, \cite{BeligPur}), also \cite{BenGnac}, in developing purity in triangulated categories, this is a very useful embedding nevertheless.  In practice it gives a model theory for triangulated categories which can be used in computations in particular examples and in proving general results - see \cite{ALPP}.

\section{Further examples:  Nori motives}

The very rough idea (of Grothendieck) is that the motive of a variety is its abelian avatar:  given a suitable category ${\mathcal V}$ of varieties (or schemes), there should be a functor from ${\mathcal V}$ to its category of motives.  That category should be abelian and such that every homology or cohomology theory on ${\mathcal V}$ factors through the functor from ${\mathcal V}$ to its category of motives.  So that functor itself should be a kind of universal (co)homology theory for ${\mathcal V}$.  There are a number of accounts of the general idea, for example \cite{BVMot}, \cite{Milne}.  Throughout we assume that the base field $K$ is a subfield of the complex numbers.

In the case that ${\mathcal V}$ is the category of nonsingular projective varieties over ${\mathcal C}$, there is such a category of motives.  An account of its construction can be found in, for example, \cite{Kleiman}.  But the question of existence for possibly singular, not-necessarily projective varieties - the conjectural category ${\mathcal M}{\mathcal M}$ of mixed motives - is open.

In the 90s Nori described the construction of an abelian category which is a candidate for the category of mixed motives, \cite{Nori}.  His idea is to construct from a category of varieties ${\mathcal V}$ a (very large) quiver $D$ such every (co)homology theory on ${\mathcal V}$ gives a representation of $D$ (or $D^{\rm op}$).  A particular representation is used to construct this category - namely singular homology.  There is more involved than this, in particular a product structure on $D$ is needed to give a tensor product operation on the category of motives.

In fact, it turns out that Nori's category of motives is exactly the abelian category of functors/pp-pairs associated to the representation given by singular homology.  In essence this first appeared in \cite{BVCL}, though it is not said this way.  There Caramello used the methods of categorical model theory, in particular classifying toposes for regular logic, and showed that Nori's category is the effectivisation of the regular syntactic category for the regular theory associated to Nori's diagram $D$.  This is a much simpler construction than Nori's original one, in particular there is no need to approximate the final result through finite subdiagrams of $D$ or to go {\it via} coalgebra representations.

In \cite{BVCL} additivity appears at a relatively late stage of the construction.  If we build that in from the beginning then, as we show in \cite{BVP}, we are able to apply the existing model theory of additive structures and, in particular, realise Nori's category of motives as a category of pp-pairs (equivalently as a localisation of the free abelian category on the preadditive category ${\mathbb Z}\overrightarrow{D}$ generated by Nori's diagram $D$).

In brief, Nori's diagram is as follows. For a detailed recent account, which also describes the relations to questions about period numbers, see \cite{HMS}.

For the vertices, we take triples $(X,Y,i)$ where $X,Y\in {\mathcal V}$, $Y$ is a closed subvariety of $X$ and $i\in {\mathbb Z},$.  The arrows of $D$ are of two kinds.  For each morphism $f:X\to X'$ of ${\mathcal V}$ we have a corresponding arrow $(X,Y,i) \to (X',Y',i)$ provided $fY\subseteq Y'$, for each $i$.  Further, for each $X,Y,Z \in {\mathcal V}$ with $Y \supseteq Z$ closed subvarieties of $X$, we add an arrow $(Y,Z,i) \to (X,Y,i-1)$.  A homology theory $H$ on ${\mathcal V}$ then gives a representation of this quiver by sending $(X,Y,i)$ to the relative homology $H_i(X,Y)$.  Arrows of the first kind are sent to the obvious maps between relative homology objects; those of the second kind are send to the connecting maps in the long exact sequence for homology.  Taking $H$ to be singular homology, we obtain a representation of $D$ and then the category ${\mathcal A}({\mathcal D})$ of pp-pairs for this representation turns out to be Nori's category of motives.

As mentioned above, an important additional feature there is that there should be a tensor product structure on motives.  This is needed, for example, to express the K\"{u}nneth formula.  In recent work \cite{BVHP} with Barbieri-Viale and Huber we extend the construction of \cite{BVP} to include an induced tensor product.  In particular we show how a tensor product on the category of $R$-modules induces a tensor product on the free abelian category ${\rm Ab}(R)$, that is, we show how to define the induced tensor operation on pp-pairs; see the following section.

\section{Extending tensor product to sorts; an example} \label{sectensor}

In the previous section, we saw how Nori motives may be obtained as the category of pp-sorts for a multisorted module.  As mentioned there, an important additional feature is that there should be a tensor product structure on motives.  Here I briefly report on work  \cite{BVHP} with Barbieri-Viale and Huber which extends the constructions of \cite{BVCL} and \cite{BVP} to an induced tensor product and I present an example, in considerable detail, illustrating a key part of that process, namely the extension of a tensor product on the category of modules to a tensor product on the category of pp-pairs.  For the general construction see \cite{BVHP}.

\vspace{6pt}

Any commutative ring $R$ has the usual tensor product over $R$ on $R\mbox{-}{\rm Mod}$.  We choose a very simple example, namely $R=K[\epsilon:\epsilon^2=0]$.  An alternative would be to take $R={\mathbb Z}_4$:  then all the computations would be essentially the same, see \cite[\S 6.6]{Perera}.  We compute the category of pp-pairs, that is, the free abelian category ${\rm Ab}(R)$, and the induced tensor product on it.  The computation of the category is, in fact, done elsewhere, \cite[\S 6.8]{Perera}, \cite[4.3]{PreAxtFlat} but, rather than computing ${\rm Ab}(R)$ as the category of modules over the Auslander algebra of $R$ (as in those references), here we will compute it as the category $(R\mbox{-}{\rm mod}, {\bf Ab})^{\rm fp}$ of finitely presented functors on finitely presented modules.  It seems very unclear how to compute it directly as a category of pairs of pp formulas, hence the usefulness of having these alternative, algebraic, descriptions of such categories.

\subsection{Computing ${\rm Ab}(K[\epsilon])$}

Every $R$-module is a direct sum of copies of the two indecomposable modules, $R$ and the unique simple module $K$ which is $R$ modulo its radical and is also isomorphic to the socle of $R$.

There is essentially one interesting exact sequence in ${\rm Mod}\mbox{-}R$, namely $$0 \to K \xrightarrow{j} R \xrightarrow{p} K \to 0,$$ where $j$ takes a chosen generator of $K$ to $\epsilon\in R$ and $p$ is projection taking $1\in R$ to $\epsilon$.  Note that multiplication by $\epsilon$ on $R$ is the composition $jp$.

Each indecomposable $R$-module gives an indecomposable representable functor: ($\ast$)

\noindent $(R,-)$ is the forgetful functor (identify functor if we think of it taking values in $R\mbox{-}{\rm mod}$ rather than the category of $K$-vector spaces);

\noindent $(K,-)$ satisfies $(K,K)=K$, $(K,R)=K$,

\noindent $(K,j):(K,K) \to (K,R)$ is the identity $1_K$,

\noindent $(K,p):(K,R) \to (K,K)$ is $0$ (any morphism $K\to R$ initially factors through $j$),

\noindent $(K,\epsilon):(K,R) \to (K,R)$ is therefore $0$.

Recall that the projective objects of the category of finitely presented functors are exactly the representable functors (and the injective objects are exactly those of the form $A\otimes_R-$ where $A$ is a finitely presented {\em right} module), see for example \cite[10.1.14, 12.1.13]{PreNBK}.

The morphism $p$ gives the exact sequence of functors $$0 \to (K,-) \xrightarrow{(p,-)} (R,-) \to F_p \to 0;$$ so $F_pM =M/{\rm ann}_M(\epsilon)$ (noting that $(K,-)$ takes a module to the subspace of its elements which annihilate $\epsilon$).  The functor $F_p$ is nonzero only on one indecomposable module, namely $R$, where its value is 1-dimensional, so it is a simple functor, which we will denote by $S$.

We also have the induced exact sequence $$0 \to (K,-) \xrightarrow{(p,-)}(R,-) \xrightarrow {(j,-)}(K,-) \to F_j \to 0$$ (note that $(j,-)$ factors as the projection to $F_p$ followed by the inclusion of that in $(K,-)$).  The functor $F_j$ takes a module $M$ to ${\rm ann}_M(\epsilon)/\epsilon M$.  Again, this is nonzero only on the one indecomposable module $K$ and has 1-dimensional image there, so it is simple, and not isomorphic to $S$; we denote it $T$.

Thus we have the projective presentations $$0 \to (K,-) \xrightarrow{(p,-)} (R,-) \xrightarrow{\pi_S} S \to 0$$ and $$0 \to (K,-) \xrightarrow{(p,-)}(R,-) \xrightarrow {(j,-)}(K,-) \xrightarrow{\pi_T} T \to 0$$ of these simple functors and hence the exact sequence $$0\to S \to (K,-) \to T \to 0.$$

By general Auslander-Reiten theory (e.g.~\cite[IV.6.8]{AsSiSk}) we know that the simple functors are in natural bijection with the indecomposable finite-length modules; therefore we have found all the simple functors; we continue to find all the indecomposable functors.

\vspace{4pt}

We know that $(p,-)$ embeds $(K,-)$ as the radical of $(R,-)$, with cokernel $S$, that $(j,-)$ maps $(R,-)$ to $(K,-)$ with cokernel $T$ and with kernel the copy, ${\rm im}(p,-)$, of $(K,-)$ inside $(R,-)$, and that both functors $(R,-)$ and $K,-)$ are local.  So we obtain $S$-above-$T$-above-$S$ as the (unique) composition series for the forgetful functor $(R,-)$.  Then we can see the following indecomposable functors:  $(R,-)= S$-above-$T$-above-$S$ (projective and injective); $S$-above-$T$ (injective); $(K,-) = T$-above-$S$ (projective); $S$, $T$.  That there are no more is seen by computing the Auslander-Reiten quiver of this category (which, as we know, is the category of modules over the Auslander algebra - which is the endomorphism ring of $R\oplus K$).  That AR-quiver is shown below (best drawn on a cylinder, with the dashed lines identified; dotted horizontal lines indicate Auslander-Reiten sequences).  The second version shows the (unique) composition series of each indecomposable.

$\xymatrix{ & & & & & \\ & (R,-) \ar[rd] \ar@{--}[u] \ar@{--}[dd] & & & & (R,-) \ar[rd] \ar@{--}[u] \ar@{--}[dd] \\ (K,-) \ar[ru] \ar[rd] \ar@{.}[rr] & & (R,-)/{\rm soc}(R,-) \ar[rd] & & (K,-) \ar[rd] \ar[ru] \ar@{.}[rr] & & (R,-)/{\rm soc} \\  \ar@{.}[r] & T \ar[ur] \ar@{.}[rr] \ar@{--}[d] & & S \ar[ur] \ar@{.}[rr] & & T \ar[ur]  \ar@{.}[r] \ar@{--}[d] & \\ & & & & & }$

$\xymatrix{ & & & & & \\ & \stackrel{\stackrel{S}{T}}{S} \ar[rd] \ar@{--}[u] \ar@{--}[dd] & & & & \stackrel{\stackrel{S}{T}}{S} \ar[rd] \ar@{--}[u] \ar@{--}[dd] \\ \stackrel{T}{S} \ar[ru] \ar[rd] \ar@{.}[rr] & & \stackrel{S}{T} \ar[rd] & & \stackrel{T}{S} \ar[rd] \ar[ru] \ar@{.}[rr] & & \stackrel{S}{T} \\  \ar@{.}[r] & T \ar[ur] \ar@{.}[rr] \ar@{--}[d] & & S \ar[ur] \ar@{.}[rr] & & T \ar[ur]  \ar@{.}[r] \ar@{--}[d] & \\ & & & & & }$

\noindent Each object of this category $(R\mbox{-}{\rm mod}, {\bf Ab})^{\rm fp}$ can be expressed by a pair of pp formulas and these already appeared implicitly in our computatations above:

\noindent $(R,-)$ is $(x=x)/(x=0)$, taking $M$ to $M$;

\noindent $(R,-)/{\rm soc}(R,-) $ is $(x=x)/(\epsilon | x)$, taking $M$ to $M/\epsilon M$;

\noindent $(K,-)$ is $(\epsilon x =0)/(x=0)$, taking $M$ to ${\rm ann}_M(\epsilon)$;

\noindent $S$ is $(\epsilon | x)/(x=0) \simeq (x=x)/(\epsilon x=0)$, taking $M$ to $\epsilon M \simeq M/{\rm ann}_M(\epsilon)$;

\noindent $T$ is $(\epsilon x=0)/(\epsilon | x)$, taking $M$ to ${\rm ann}_M(\epsilon)/\epsilon M$.

\noindent As in the example in Section \ref{secA3again}, one may compute a pp formula $\rho(x,x')$ defining any given morphism (i.e.~natural transformation) between these functors.  Also as in that example, all of them are rather simple, being built from $x'=x$ and $x'=0$.

\subsection{A localisation}

Let us compute the category ${\mathcal A}(_RR)$ of pp-pairs for the free module $_RR$, equivalently for the category of projective = flat $R$-modules, as a localisation of ${\rm Ab}(R)$.  We need to identify the Serre subcategory ${\mathcal S}_R$ of functors $F$ which are $0$ when evaluated at $R$.  Looking at, for example, the expression of each indecomposable functor as a pp-pair, it is immediate that only indecomposable in ${\mathcal S}_R$ is $T$ and hence that subcategory consists just of direct sums of copies of $T$.  What, then, is the quotient category ${\mathcal A}(R) = {\rm Ab}(R)/{\mathcal S}_R$?  Since $T$ becomes $0$ in the quotient category we see that this category has the following shape.

$\xymatrix{ & & & & & \\ & \stackrel{S}{S} \ar[rd] \ar@{--}[u] \ar@{--}[dd] & & & & \stackrel{S}{S} \ar[rd] \ar@{--}[u] \ar@{--}[dd] \\ S \ar[ru] \ar[rd] \ar@{.}[rr] & & S \ar[rd] & & S \ar[rd] \ar[ru] \ar@{.}[rr] & & S \\  \ar@{.}[r] & 0 \ar[ur] \ar@{.}[rr] & & S \ar[ur] \ar@{.}[rr] & & 0 \ar[ur]  \ar@{.}[r]  & \\ }$

Looking at the middle two maps, an epimorphism, respectively monomorphism, in ${\rm Ab}(R)$, the kernel, resp.~cokernel, is now 0 in the quotient category so these have become isomorphisms in ${\mathcal A}(R)$ (we are using that the functor ${\rm Ab}(R) \to {\mathcal A}(R)$ is exact).  So we see that the quotient category is as follows.

$\xymatrix{ & & & & & \\ & & \stackrel{S}{S} \ar[rd] \ar@{--}[u] \ar@{--}[d] & & \stackrel{S}{S} \ar[rd] \ar@{--}[u] \ar@{--}[d] \\ & S \ar[ru] \ar@{.}[rr] & & S \ar[ru] \ar@{.}[rr] & & S}$

That is, ${\mathcal A}(R) \simeq R\mbox{-}{\rm mod}$.  In fact this is a general result, see \cite[7.1]{PreAxtFlat}.  There it is the statement that the definable subcategory generated by the flat modules is the category of exact functors from the smallest abelian subcategory of $R\mbox{-}{\rm Mod}$ containing $R\mbox{-}{\rm mod}$; by \ref{funD} this is an equivalent statement.  In our case, where the ring is coherent, the statement is simpler because the flat modules already form a definable subcategory and the finitely presented modules form an abelian subcategory of the category of all modules.

\subsection{Tensor product on ${\rm Ab}(K[\epsilon])$}

Now we compute the tensor product on this category ${\rm Ab}(R)$ which is induced by the usual tensor product on modules over $R$.

The general definition \cite{BVHP} of the induced monoidal/tensor structure on ${\rm Ab}(R)$ (\cite{BVHP}) is:

\noindent given $A,B\in R\mbox{-}{\rm mod}$, define $\otimes$ on the corresponding representable functors by $(A,-)\otimes (B,-) = (A\otimes B,-)$;

\noindent given morphisms $f:A\to A'$ and $g:B\to B'$ between finitely presented modules, define $(f,-)\otimes (g,-) = (f\otimes g, -): (A'\otimes B', -) \to (A\otimes B, -)$.

\noindent The tensor product constructed on ${\rm Ab}(R)$ will be required to be right exact, so that forces the rest of the construction.  Namely, a typical object of ${\rm Ab}(R)$ is the cokernel of a morphism between representables:  it has the form $F_f$ for some morphism $f:A\to B$, which yields the exact sequence $$(B,-) \xrightarrow{(f,-)} (A,-) \xrightarrow{\pi} F_f \to 0.$$  Therefore if $C\in R\mbox{-}{\rm mod}$ then the value of $(C,-)\otimes F_f$ is forced by requiring the sequence $$(C,-)\otimes (B,-) \to (C,-)\otimes (A,-) \xrightarrow{\pi} (C,-) \otimes F_f \to 0$$ to be exact.  That can then be repeated to compute the general case $F_g\otimes F_f$ (explicit computations follow).

\vspace{4pt}

First we need to compute $\otimes$ on $R\mbox{-}{\rm mod}$.  Of course, $R\otimes -$ is the identity functor on $R\mbox{-}{\rm mod}$, hence $(R,-)\otimes -$ is the identity functor on ${\rm Ab}(R)$ -  it is so on the full subcategory of projective = representable objects of ${\rm Ab}(R)$ therefore it is so, by right exactness of $\otimes$, on all of ${\rm Ab}(R)$.

As to $(K,-)\otimes-$, and noting that $K\otimes K=K$, its action is given on the subcategory of projectives by:

\noindent $(K,-)\otimes (K,-) = (K,-)$ and $(K,-)\otimes (R,-) = (K,-)$;

\noindent $(1_K,-)\otimes (p,-) = 1_{(K,-)}$ since $1_K\otimes p$ must be an epimorphism hence an isomorphism ($1_K$ denotes the identity map on $K$, not a generator of the module $K$);

\noindent $(1_K,-)\otimes (j,-) = 0$ by direct computation (of $1_K\otimes j$ on a generator of $K\otimes K$) or just since $1_K\otimes j$ cannot be monic, since $K$ is not flat, hence is $0$.

\vspace{4pt}

Consider the projective presentations  $$0\to (K,-) \xrightarrow{(p,-)} (R,-) \xrightarrow{\pi_S} S \to 0 \hspace{0.7cm} \mbox{ (1)}$$ $$0 \to (K,-) \xrightarrow{(p,-)} (R,-) \xrightarrow{(j,-)} (K,-) \xrightarrow{\pi_T} T \to 0 \hspace{0.7cm} \mbox{ (2)}$$  of $S$ and $T$.

\vspace{4pt}

Applying $K\otimes-$ to (2), we obtain the right exact sequence $$(K\otimes K,-) \xrightarrow{(1_K\otimes p,-)} (K\otimes R,-) \xrightarrow{(1_K\otimes j,-)} (K\otimes K,-) \xrightarrow{(1_K,-)\otimes \pi_T} (K,-)\otimes T \to 0$$  that is $$(K,-) \xrightarrow{1} (K,-) \xrightarrow{0} (K,-) \xrightarrow{\pi_T} (K,-)\otimes T \to 0,$$ from which we deduce $(K,-)\otimes T=(K,-)$, and $(1_K,-)\otimes \pi_T=1_{(K,-)}$ \hspace{0.2cm} (3).

\vspace{4pt}

Applying $K\otimes -$ to (1), we obtain $$(K\otimes K,-) \xrightarrow{(1_K\otimes p,-)} (K\otimes R,-) \xrightarrow{(1_K,-) \otimes \pi_S} (K,-)\otimes S \to 0,$$ that is, $(K,-) \xrightarrow{1} (K,-) \xrightarrow{(1_K,-) \otimes \pi_S} (K,-)\otimes S \to 0$, from which we deduce $(K,-)\otimes S =0$ and $(1_K,-)\otimes \pi_S =0$ \hspace{0.2cm} (4).

\vspace{4pt}

Applying $S\otimes -$ to (1), we obtain $$S\otimes (K,-) \xrightarrow{1_S\otimes (p,-)} S\otimes (R,-) \xrightarrow{1_S\otimes \pi_S} S\otimes S \to 0,$$ that is, using (4), $0 \to S\xrightarrow{1_S\otimes \pi_S} S\otimes S \to 0$.  So $S\otimes S =S$, $1_S\otimes \pi_S = 1_S$ \hspace{0.2cm} (5).

\vspace{4pt}

Applying $S\otimes -$ to (2), we obtain $$S\otimes (K,-) \xrightarrow{(1_S\otimes (p,-)} S\otimes (R,-) \xrightarrow{1_S\otimes (j,-)} S\otimes (K,-) \xrightarrow{1_S\otimes \pi_T} S\otimes T \to 0,$$ that is, using (4), $0 \to S \to 0 \xrightarrow{S\otimes \pi_T} S\otimes T \to 0$, from which we deduce $S\otimes T=0$ and $1_S\otimes \pi_T=0$ \hspace{0.2cm} (6).

\vspace{4pt}

Applying $T\otimes -$ to (1), we obtain $$T\otimes (K,-) \xrightarrow{1_T\otimes (p,-)} T\otimes (R,-) \xrightarrow{1_T\otimes \pi_S} T\otimes S \to 0,$$ that is, using (3) and (6),  $(K,-) \xrightarrow{1_T\otimes (p,-)} T \to 0 \to 0$, from which we deduce $1_T\otimes (p,-)$ is projection from $(K,-)$ to its top $T$ \hspace{0.2cm} (7).

\vspace{4pt}

Finally, applying $T\otimes -$ to (2), we obtain $$T\otimes (K,-) \xrightarrow{1_T\otimes (p,-)} T\otimes (R,-) \xrightarrow{1_T\otimes (j,-)} T\otimes (K,-) \xrightarrow{1_T\otimes \pi_T} T\otimes T \to 0,$$ that is, using (3), $(K,-) \xrightarrow{1_T\otimes (p,-)} T \xrightarrow{1_T\otimes (j,-)} (K,-) \xrightarrow{T\otimes \pi_T} T\otimes T \to 0$, from which we see $1_T\otimes (j,-) =0$ (there being only the zero map from $T$ to $(K,-)$) hence $T\otimes T =(K,-)$ and $1_T\otimes \pi_T =1_{(K,-)}$ \hspace{0.2cm} (8).

\vspace{4pt}

We have, in particular, that $S\otimes S=S$, $S\otimes T =0$, $T\otimes T = (K,-)$.

\vspace{4pt}

Now, $S$ is a subobject of a representable functor, hence (Burke, see\cite[10.2.14]{PreNBK}), it is a pp formula, indeed is $\epsilon|x$, that is $(\epsilon | x)/(x=0)$ as an object of this category of pp-pairs.  So we have just shown that $(\epsilon |x) \otimes (\epsilon |x)$ is again the pp formula $\epsilon |x$.  This is interesting, not least because it shows that the induced tensor product on pp-pairs (in particular on pp formulas) is not what one might at first guess it to be, see Section \ref{secpptensor}.

\vspace{4pt}

Lest these computations seem a little {\it ad hoc}, we recompute these values of $\otimes$ on ${\rm Ab}(R)$, directly from the definition of the extension of $\otimes$ to the functor category, using the projective presentations $$(K,-) \xrightarrow{(p,-)} (R,-) \xrightarrow{\pi_S} S \to 0$$ and $$(R,-) \xrightarrow{(j,-)} (K,-) \xrightarrow{\pi_T} T \to 0$$ of the simple functors $S$ and $T$.

\vspace{4pt}

To compute $S\otimes S$:

$\xymatrix{
(K\otimes K,-) \ar[rr]^{(p\otimes 1_K,-)} \ar[d]_{(1_K \otimes p,-)} && (R\otimes K,-) \ar[rr]^{\pi_S \otimes (1_K,-)} \ar[d]_{(1_R\otimes p,-)} && S\otimes (K,-)  \ar[d]_{1_S\otimes (p,-)} \ar[r] & 0 \\
(K\otimes R,-) \ar[rr]^{(p\otimes 1_R,-)} \ar[d]_{(1_K,-)\otimes \pi_S} && (R\otimes R,-) \ar[rr]^{\pi_S\otimes (1_R,-)} \ar[d]_{(1_R,-)\otimes \pi_S} && S\otimes (R,-)  \ar[d]_{1_S\otimes \pi_S} \ar[r] & 0 \\
(K,-)\otimes S \ar[rr]^{(p,-)\otimes 1_S} \ar[d] && (R,-)\otimes S \ar[rr]^{\pi_S\otimes 1_S} \ar[d] && S\otimes S  \ar[r] \ar[d] \ar[r] & 0 \\
0 && 0 && 0
}$

\noindent Which simplifies just using the values of $\otimes$ on the subcategory of representables and the fact that $(R,-)\otimes -$ is the identity, together with right exactness of $\otimes$, to:

$\xymatrix{(K,-)\ar[r]^{1} \ar[d]_{1} & (K,-) \ar[r] \ar[d]_{(p,-)} & 0  \ar[d] \ar[r] & 0 \\
(K,-) \ar[r]^{(p,-)} \ar[d] & (R,-) \ar[r]^{\pi_S} \ar[d]_{\pi_S} & S  \ar[d]^{\pi_S\otimes 1_S} \ar[r] & 0 \\
0 \ar[r] \ar[d] & S \ar[r]^{\pi_S\otimes 1_S} \ar[d] & S\otimes S  \ar[r] \ar[d] \ar[r] & 0 \\
0 & 0 & 0
}$

\noindent Hence $S\otimes S =S$ and $\pi_S\otimes 1_S=1_S$.

\vspace{4pt}

To compute $T\otimes T$:

$\xymatrix{
(R\otimes R,-) \ar[rr]^{(j\otimes 1_R,-)} \ar[d]_{(1_R \otimes j,-)} && (K\otimes R,-) \ar[rr]^{\pi_T \otimes (1_R,-)} \ar[d]_{(1_K\otimes j,-)} && T\otimes (R,-)  \ar[d]_{1_T\otimes (j,-)} \ar[r] & 0 \\
(R\otimes K,-) \ar[rr]^{(j\otimes 1_K,-)} \ar[d]_{(1_R,-)\otimes \pi_T} && (K\otimes K,-) \ar[rr]^{\pi_T\otimes (1_K,-)} \ar[d]_{(1_K,-)\otimes \pi_T} && T\otimes (K,-)  \ar[d]_{1_T\otimes \pi_T} \ar[r] & 0 \\
(R,-)\otimes T \ar[rr]^{(j,-)\otimes 1_T} \ar[d] && (K,-)\otimes T \ar[rr]^{\pi_T\otimes 1_T} \ar[d] && T\otimes T  \ar[r] \ar[d] \ar[r] & 0 \\
0 && 0 && 0
}$

\noindent Which simplifies to:

$\xymatrix{(R,-)\ar[r]^{(j,-)} \ar[d]_{(j,-)} & (K,-) \ar[r]^{\pi_T} \ar[d]_{0} & T  \ar[d]^{1_T\otimes (j,-)} \ar[r] & 0 \\
(K,-) \ar[r]^{0} \ar[d]_{\pi_T} & (K,-) \ar[r]^{1} \ar[d]_{1} & (K,-)  \ar[d]^{1_T\otimes \pi_T} \ar[r] & 0 \\
T \ar[r]^{(j,-)\otimes 1_T} \ar[d] & (K,-) \ar[r]^{\pi_T\otimes 1_T} \ar[d] & T\otimes T  \ar[r] \ar[d] \ar[r] & 0 \\
0 & 0 & 0
}$

From commutativity of the diagram we must have $1_T\otimes (j,-)=0$, hence $T\otimes T \simeq (K,-)$ and $1_T\otimes \pi_T = 1_{(K,-)}$.

\vspace{4pt}

Finally, to compute $S\otimes T$:

$\xymatrix{
(K\otimes R,-) \ar[rr]^{(p\otimes 1_R,-)} \ar[d]_{(1_K \otimes j,-)} && (R\otimes R,-) \ar[rr]^{\pi_S \otimes (1_R,-)} \ar[d]_{(1_R\otimes j,-)} && S\otimes (R,-)  \ar[d]_{1_S\otimes (j,-)} \ar[r] & 0 \\
(K\otimes K,-) \ar[rr]^{(p\otimes 1_K,-)} \ar[d]_{(1_K,-)\otimes \pi_T} && (R\otimes K,-) \ar[rr]^{\pi_S\otimes (1_K,-)} \ar[d]_{(1_R,-)\otimes \pi_T} && S\otimes (K,-)  \ar[d]_{1_S\otimes \pi_T} \ar[r] & 0 \\
(K,-)\otimes T \ar[rr]^{(p,-)\otimes 1_T} \ar[d] && (R,-)\otimes T \ar[rr]^{\pi_S\otimes 1_T} \ar[d] && S\otimes T  \ar[r] \ar[d] \ar[r] & 0 \\
0 && 0 && 0
}$

\noindent Which simplifies to:

$\xymatrix{(K,-)\ar[r]^{(p,-)} \ar[d]_{0} & (R,-) \ar[r]^{\pi_S} \ar[d]_{(j,-)} & S  \ar[d]^{1_S\otimes (j,-)} \ar[r] & 0 \\
(K,-) \ar[r]^{1} \ar[d]_{(1_K,-)\otimes \pi_T} & (K,-) \ar[r]^{\pi_T} \ar[d]_{\pi_S} & S\otimes (K,-)  \ar[d]^{\pi_S\otimes 1_S} \ar[r] & 0 \\
(K,-)\otimes T \ar[r] \ar[d] & T \ar[r]^{\pi_S\otimes 1_S} \ar[d] & S\otimes T  \ar[r] \ar[d] \ar[r] & 0 \\
0 & 0 & 0
}$

From which we see that $S\otimes (K,-)=0$ and hence that $S\otimes T=0$.

\vspace{4pt}

Either method will give the remaining values of $\otimes$; for example as follows.

\vspace{4pt}

Consider the exact sequence $$0\to S \to \stackrel{T}{S}=(K,-) \to T \to 0$$ in ${\rm Ab}(R)$.

\vspace{4pt}

Tensor with $T$ to get $$T\otimes S \to T\otimes (K,-) \to T\otimes T \to 0,$$ that is, $0 \to T\otimes (K,-) \to (K,-) \to 0$.  So $T\otimes (K,-) = (K,-)$.

\vspace{4pt}

Tensor with $(K,-)$ which is $\otimes$-flat (see \cite{BVHP}), to get the exact sequence $$0\to (K,-)\otimes S \to (K,-)\otimes (K,-) \to (K,-)\otimes T \to 0,$$ that is, $0\to (K,-)\otimes S \to  (K,-) \to (K,-)\to 0$.  So $(K,-)\otimes S =0$.

\vspace{4pt}

Next, consider the exact sequence $$0\to T \to \stackrel{S}{T} \to S \to 0.$$

\vspace{4pt}

Tensor with $(K,-)$ to get the exact sequence $$0\to (K,-)\otimes T \to (K,-)\otimes \stackrel{S}{T} \to (K,-)\otimes S \to 0,$$ that is, $$0\to \stackrel{T}{S} \to  (K,-) \to (K,-)\otimes \stackrel{S}{T} \to 0 \to 0.$$  So $(K,-) \otimes \stackrel{S}{T} = (K,-)$.

\vspace{4pt}

Tensor the same sequence with $S$ to get $$S\otimes T \to S\otimes (K,-) \to S\otimes S \to 0,$$ that is, $0 \to S\otimes (K,-) \to S \to 0$.  So $S\otimes (K,-) = S$.

\vspace{4pt}

Finally, consider the exact sequence $$0\to S \to \stackrel{\stackrel{S}{T}}{S} = (R,-) \to \stackrel{S}{T}  \to 0.$$

Tensor with $T$ to get $$T\otimes S=0 \to T \to T\otimes \stackrel{S}{T} \to 0.$$  So $T \otimes \stackrel{S}{T} = T$.

\vspace{4pt}

Tensor the same sequence with $\stackrel{S}{T}$ to get $$\stackrel{S}{T} \otimes S \to  \stackrel{S}{T} \to \stackrel{S}{T} \otimes \stackrel{S}{T} \to 0,$$ that is, $ S \to\stackrel{S}{T} \to \stackrel{S}{T} \otimes \stackrel{S}{T} \to 0$.  So $\stackrel{S}{T} \otimes \stackrel{S}{T} = \stackrel{S}{T}$.

\vspace{4pt}

In summary, here is the complete table of values of $\otimes$ on objects of ${\rm Ab}(R)$.

\begin{tabular}{c|ccccc}
& $S$ & $T$ & $(K,-)=\stackrel{T}{S}$ & $\stackrel{S}{T}$ & $(R,-)=\stackrel{\stackrel{S}{T}}{S}$ \\
\hline
$S$ & $S$ & $0$ & $0$ & $S$ & $S$ \\
\\
$T$ & $0$ & $(K,-)$ & $(K,-)$ & $T$ & $T$ \\
\\
$(K,-)$ & $0$ & $(K,-)$ & $(K,-)$ & $(K,-)$ & $(K,-)$ \\
\\
$\stackrel{S}{T}$ & $S$ & $T$ & $(K,-)$ & $\stackrel{S}{T}$ & $\stackrel{S}{T}$  \\
\\
$(R,-)$ & $S$ & $T$ & $(K,-)$ & $\stackrel{S}{T}$ & $(R,-)$ \\
\end{tabular}

\subsection{Tensor product of pp-pairs} \label{secpptensor}

Since the objects of the functor category ${\rm Ab}(R)$ can be seen as pp-pairs, it follows that there is a tensor product on these in the case that there is a tensor product on $R\mbox{-}{\rm mod}$, in particular one may make sense of the tensor product of two pp formulas for any commutative ring.  It is not, however, what one might at first guess it to be.  Suppose throughout this subsection that we have a given monoidal operation $\otimes$ on $R\mbox{-}{\rm mod}$.

One can do the following.  Let $\phi$ and $\psi$ be pp formulas for $R$-modules.  Choose corresponding free realisations $(C_\phi, \overline{c}_\phi)$ and $(C_{\phi'}, \overline{c}_{\phi'})$.  That is, $C_\phi$ is a finitely presented ${\mathcal C}$-module and $c_\phi$ an $n$-tuple of elements from $C_\phi$ with pp-type in $C_\phi$ generated by $\phi$, where $n$ is the number of free variables of $\phi$.  Similarly for $\psi$, which has, say, $n'$ free variables.  Then consider the $R$-module $C_\phi \otimes C_{\phi'}$ and the $n\times n'$-tuple $\overline{c_\phi} \otimes \overline{c_{\phi'}}$ in $C_\phi \otimes C_{\phi'}$.  Here $\overline{c_\phi} \otimes \overline{c_{\phi'}}$ means the tuple with entries $c_i\otimes c'_j$ (using hopefully clear notation).  In the case $n=n'=1$ this means $c_\phi\otimes c_{\phi'}$.  Since we are assuming that the tensor product of finitely presented modules is finitely presented, the pp-type of $\overline{c_\phi} \otimes \overline{c_{\phi'}}$ in $C_\phi \otimes C_{\phi'}$ can be generated by a pp formula (as opposed to an infinite set of pp formulas).  It is tempting to define $\phi\otimes \phi'$ to be any generator of that pp-type.  {\em But}, referring to the example above, namely $S\otimes S$ where $S =(\epsilon | x)$, a free realisation of $\epsilon|x$ is $(R,\epsilon)$ and the element $\epsilon\otimes \epsilon$ in $R\otimes R=R$ is $0$, so this attempt at a definition would give the formula $x=0$, not the correct formula $\epsilon | x$.  (This recipe using free realisations does work for quantifier-free pp formulas - these correspond to the projective = representable functors - because in that case, the tuples chosen will (without loss of generality) generate the modules $C_\phi$ and $C_\psi$.)

\subsection{Another monoidal structure ${\rm Ab}(K[\epsilon])$ when ${\rm char}(K)=2$} \label{secgptensor}

Just to show that inducing tensor products on pp formulas is rather intriguing, we consider another monoidal structure on modules over $K[\epsilon: \epsilon^2=0]$ in the case that the characteristic of $K$ is $2$.  In that case, $R=K[\epsilon; \epsilon^2=0]$ is also the group algebra over $K$ of the cyclic group of order 2.  Namely, if $g$ is the non-identity element of that group then we set $\epsilon =1+g$.  The tensor product of $A,B\in R\mbox{-}{\rm mod}$  as group representations is obtained by forming $A\otimes_K B$ and then having $g$ act by $g.a\otimes b = ga\otimes gb$.  This is quite different from the tensor product as $R$-modules:  for example, this tensor product is exact, not just right exact, and one can compute, for example, that $R\otimes_K R \simeq R^2$, an $R$-basis for the latter being $1\otimes \epsilon, (1\otimes 1 + 1\otimes \epsilon)$.  We will write $\otimes_K$ in order to distinguish this from the tensor product over $R$.

We can repeat the same kinds of computations of the induced monoidal structure on the functor category as for the usual tensor product over $R$; we will see that the induced tensor product on the functor category is different from the previous one.

First we see that $K\otimes_KK =K$ and $R\otimes_KK \simeq R \simeq K\otimes_KR$.  The trivial module $K$ is a $\otimes_K$ unit, with $1_K\otimes_K j=j$ and $1_K\otimes_K p =p$, where $j$, $p$ are as in the exact sequence $$0\to K \xrightarrow{j} R \xrightarrow{p} K \to 0.$$  Exactness of $R\otimes_K-$ applied to that sequence gives the exact sequence $$0\to R \xrightarrow{1_R\otimes_K j} R^2 \xrightarrow{1_R\otimes_K p} R \to 0$$ which must be split-exact, with $1_R\otimes_K j$ and $1_R\otimes_K p$ being respectively a split embedding and split epimorphism.  We continue to use the notation from Section \ref{sectensor}.

To compute $S\otimes_KS$:

$\xymatrix{(K\otimes_K K,-) \ar[rr]^{(p\otimes_K 1_K,-)} \ar[d]_{(1_K\otimes_K p,-)} && (R\otimes_K K,-) \ar[rr]^{\pi_S \otimes_K (1_K,-)} \ar[d]_{(1_R\otimes_K p,-)} && S\otimes_K (K,-) \ar[d]_{1_S \otimes_K (p,-)} \ar[r] & 0 \\
(K\otimes_K R,-) \ar[rr]^{(p\otimes_K 1_R,-)} \ar[d]_{(1_K,-)\otimes_K \pi_S} && (R\otimes _K R,-) \ar[rr]^{\pi_S \otimes_K (1_R,-)} \ar[d]_{(1_R,-)\otimes_K \pi_S} && S \otimes_K (R,-)  \ar[d]_{ 1_S \otimes_K \pi_S} \ar[r] & 0 \\
(K,-) \otimes_K S \ar[rr]^{(p,-)\otimes_K 1_S} \ar[d] && (R,-)\otimes_K S \ar[rr]^{\pi_S \otimes_K 1_S} \ar[d] && S\otimes_K S  \ar[r] \ar[d] \ar[r] & 0 \\
0 && 0 && 0
}$

\noindent That is:

$\xymatrix{(K,-)\ar[rr]^{(p,-)} \ar[d]_{(p,-)} && (R,-) \ar[rr]^{\pi_S} \ar[d]_{(1_R\otimes_K p,-)} && S  \ar[d]_{1_S \otimes_K (p,-)} \ar[r] & 0 \\
(R,-) \ar[rr]^{(p\otimes_K 1_R,-)} \ar[d]_{\pi_S} && (R^2,-) \ar[rr]^{\pi_S \otimes_K (1_R,-)} \ar[d]_{(1_R,-)\otimes_K \pi_S} && (R,-)\otimes_K S  \ar[d]_{1_S\otimes_K \pi_S} \ar[r] & 0 \\
S \ar[rr]^{(p,-) \otimes_K 1_S} \ar[d] && S\otimes_K (R,-) \ar[rr]^{\pi_S\otimes_K 1_S} \ar[d] && S\otimes_K S \ar[r] \ar[d] \ar[r] & 0 \\
0 && 0 && 0
}$

\noindent Since $1_R\otimes_K p$ is a split epi, $(1_R\otimes_Kp,-)$ is a split embedding, so we deduce $(R,-)\otimes_KS =(R,-)$ and $(1_R,-)\otimes_K \pi_S$ is a split epi.  The question then is whether $1_S\otimes_K (p,-):S \to (R,-)\otimes_K S$ is or is not $0$, equivalently, since $\pi_S$ is epi, whether, in the top left square, the composition down then across is $0$, equivalently whether the image of $(1_R \otimes_K p,-)$ is the same as the image of $(p\otimes_K 1_R,-)$.  We can compute the components of these natural transformations at the $R$-module $R$ and see that the image of the first map is the vector subspace of $R\otimes_K R$ generated by $1_R\otimes_K\epsilon$ and $ \epsilon\otimes_K \epsilon$ (where here $\epsilon$ means the endomorphism multiplication-by-$\epsilon$ of $R$).  By symmetry, the image of the horizontal map is the subspace generated by $\epsilon \otimes_K 1_R$ and $\epsilon\otimes_K \epsilon$.  They are not the same, hence $1_S \otimes_K (p,-)$ is monic.  Therefore its cokernel, $S\otimes_K S = \stackrel{S}{T}$.  Recall from before that $S\otimes S =S$ under the monoidal structure on ${\rm Ab}(R)$ induced from tensoring over $R$, so these structures are quite different and, with respect to this tensor, we have that $(\epsilon | x) \otimes_K (\epsilon |x)$ is the formula $x=x$ (as opposed to $\epsilon |x$ for the other structure).

\vspace{4pt}

To compute $T\otimes T$ we start with the same $3\times 3$ diagram of right exact sequences as in Section \ref{sectensor} but with $\otimes_K$ replacing $\otimes$ and recalling that $(K,-)$ is the $\otimes_K$-identity on ${\rm Ab}(R)$.  That gives:

$\xymatrix{
(R^2,-) \ar[rr]^{(j\otimes_K 1_R,-)} \ar[d]_{(1_R \otimes_K j,-)} && (R,-) \ar[rr]^{\pi_T \otimes_K (1_R,-)} \ar[d]_{(j,-)} && T\otimes_K (R,-)  \ar[d]_{1_T\otimes_K (j,-)} \ar[r] & 0 \\
(R,-) \ar[rr]^{(j,-)} \ar[d]_{(1_R,-)\otimes_K \pi_T} && (K,-) \ar[rr]^{\pi_T} \ar[d]_{\pi_T} && T  \ar[d]_{1_T\otimes_K \pi_T} \ar[r] & 0 \\
(R,-)\otimes_K T \ar[rr]^{(j,-)\otimes_K 1_T} \ar[d] && T \ar[rr]^{\pi_T\otimes_K 1_T} \ar[d] && T\otimes_K T  \ar[r] \ar[d] \ar[r] & 0 \\
0 && 0 && 0
}$

\noindent Since $(j\otimes_K 1_R,-)$ is a split projection, $T\otimes_K (R,-)=0$ and $\pi_T \otimes_K (1_R,-) =0$

$\xymatrix{(R^2,-)\ar[r]^{(j\otimes_K 1_R,-)} \ar[d]_{(1_R\otimes_K j,-)} & (R,-) \ar[r]^{0} \ar[d]_{(j,-)} & 0  \ar[d] \ar[r] & 0 \\
(R,-) \ar[r]^{(j,-)} \ar[d]_{\pi_T} & (K,-) \ar[r]^{\pi_T} \ar[d]_{\pi_T} & T  \ar[d]^{1_T\otimes_K \pi_T} \ar[r] & 0 \\
0 \ar[r] \ar[d] & T \ar[r]^{\pi_T\otimes_K 1_T} \ar[d] & T\otimes_K T  \ar[r] \ar[d] \ar[r] & 0 \\
0 & 0 & 0
}$

\noindent From which we see that $T\otimes_K T = T$ and $1_T\otimes_K \pi_T = 1_T$.

\vspace{4pt}

Finally, to compute $S\otimes_K T$ we obtain the diagram:

$\xymatrix{
(R,-) \ar[rr]^{(p\otimes_K 1_R,-)} \ar[d]_{(j,-)} && (R^2,-) \ar[rr]^{\pi_S \otimes_K (1_R,-)} \ar[d]_{(1_R\otimes_K j,-)} && S\otimes_K (R,-)  \ar[d]_{1_S\otimes_K (j,-)} \ar[r] & 0 \\
(K,-) \ar[rr]^{(p,-)} \ar[d]_{\pi_T} && (R,-) \ar[rr]^{\pi_S} \ar[d]_{(1_R,-)\otimes_K \pi_T} && S  \ar[d]_{1_S\otimes_K \pi_T} \ar[r] & 0 \\
T \ar[rr]^{(p,-)\otimes_K 1_T} \ar[d] && (R,-)\otimes_K T \ar[rr]^{\pi_S\otimes_K 1_T} \ar[d] && S\otimes_K T  \ar[r] \ar[d] \ar[r] & 0 \\
0 && 0 && 0
}$

\noindent We see (again) that $(R,-)\otimes_K T=0$ and hence deduce $S\otimes_K T=0$.

\vspace{4pt}

In summary:  $S\otimes_K S =(R,-)$; $T\otimes_K T=T$; $S\otimes_K T=0$.

Indeed, further computation (given after the table) gives us the following.

\begin{tabular}{c|ccccc}
$\otimes_K$ & $S$ & $T$ & $(K,-)=\stackrel{T}{S}$ & $\stackrel{S}{T}$ & $(R,-)=\stackrel{\stackrel{S}{T}}{S}$ \\
\hline
$S$ & $\stackrel{S}{T}$ & $0$ & $S$ & $\stackrel{S}{T}$ & $(R,-)$ \\
\\
$T$ & $0$ & $T$ & $T$ & $0$ & $0$ \\
\\
$(K,-)$ & $S$ & $T$ & $(K,-)$ & $\stackrel{S}{T}$ & $(R,-)$ \\
\\
$\stackrel{S}{T}$ & $\stackrel{S}{T}$ & $0$ & $\stackrel{S}{T}$ & $\stackrel{S}{T}$ & $(R,-)$  \\
\\
$(R,-)$ & $(R,-)$ & $0$ & $(R,-)$ & $(R,-)$ & $(R,-)^2$ \\
\end{tabular}

\vspace{6pt}

We can get the other entries from the exact sequence $$0\to T \to \stackrel{S}{T} \to S \to 0.$$

Tensoring with $S$ gives the exact sequence $$ S\otimes_K T \to S\otimes_K\stackrel{S}{T} \to S\otimes_KS \to 0,$$  that is $ 0 \to S\otimes_K \stackrel{S}{T} \to \stackrel{S}{T}\to 0$ from which we deduce $S\otimes_K \stackrel{S}{T} = \stackrel{S}{T}$.

Next, since $ (R,-) \to \stackrel{S}{T} $ is an epimorphism, so is $T\otimes_K (R,-) \to T\otimes_K \stackrel{S}{T} $ so, since $T\otimes_K (R,-) =0$ we have $T\otimes_K \stackrel{S}{T} =0$.

Then, tensoring the first exact sequence with $\stackrel{S}{T}$ gives $$\stackrel{S}{T} \otimes_K T \to  \stackrel{S}{T}\otimes_K \stackrel{S}{T} \to \stackrel{S}{T} \otimes_K S \to 0,$$ that is, $0 \to \stackrel{S}{T} \otimes_K \stackrel{S}{T} \to \stackrel{S}{T} \to 0$, hence $\stackrel{S}{T}\otimes_K \stackrel{S}{T} = \stackrel{S}{T}$.

Finally, applying $(R,-)$ to the first exact sequence gives $$(R,-) \otimes_K T \to (R,-) \otimes_K \stackrel{S}{T} \to (R,-)\otimes_K S \to 0,$$ that is, $0 \to (R,-) \otimes \stackrel{S}{T} \to (R,-) \to 0$, so $(R,-) \otimes_K \stackrel{S}{T} = (R,-)$.


\begin{thebibliography}{9999}

\bibitem{AdRo}   J. Ad\'{a}mek and J. Rosick\'{y}, Locally Presentable and
Accessible Categories, London Math. Soc., Lecture Note Ser., Vol.
189, Cambridge University Press, 1994.

\bibitem{Adel}   M. Adelman, Abelian categories over additive ones,
J. Pure Appl. Algebra, 3(2) (1973), 103-117.

\bibitem{ALPP}   K. Arnesen, R. Laking, D. Pauksztello and M. Prest, Ziegler spectra of derived discrete categories, Adv. in Math., {\em to appear}, arXiv:1603.00775.

\bibitem{SGA4}   M. Artin, A. Grothendieck and J.L. Verdier,
Th\'{e}orie des Topos et Cohomologie Etale des Sch\'{e}mas (SGA4),
Lecture Notes in Mathematics, Vol. 269, Springer-Verlag, 1972.

\bibitem{AsSiSk}   I. Assem, D. Simson and A. Skowro\'{n}ski,
Elements of the Representation Theory of Associative Algebras.  1:
Techniques of Representation Theory, London Math. Soc. Student
Texts,  Vol. 65, Cambridge University Press, 2006.

\bibitem{BVMot}   L. Barbieri-Viale, A pamphlet on motivic cohomology, Milan J. Math., 73(2005), 53-73.  arXiv:math/0508147

\bibitem{BVCL}   L. Barbieri-Viale,  O. Caramello and  L. Lafforgue:  Syntactic categories for Nori motives, arXiv:1506.06113v1, 2015.

\bibitem{BVHP}   L. Barbieri-Viale, A. Huber and M. Prest, Tensor structure for Nori motives, {\em in preparation}.

\bibitem{BVP}   L. Barbieri-Viale and M. Prest, Definable categories and T-motives, Rendiconti Sem. Mat. Univ. Padova, {\em to appear}.  arXiv:1604.00153, 2016.

\bibitem{BV}  L. Barbieri-Viale,  $T$-motives, J. Pure Appl. Algebra, 221(7) (2017), 1565-1588.

\bibitem{Belig}   A. Beligiannis,  Relative homological algebra and
purity in triangulated categories, J. Algebra, 227(1) (2000), 268-361.

\bibitem{BeligPur}   A. Beligiannis,  Purity and almost split
sequences in abstract homotopy categories:  a unified approach {\it
via}  Brown representability, Algebras and Representation Theory, 5(5)
(2002), 483-525.

\bibitem{BenBk1}   D. J. Benson,  Representations and Cohomology I: Basic
representation theory of finite groups and associative algebras, Cambridge Studies
in Advanced Math., Vol. 30, Cambridge University Press, 1991.

\bibitem{BenGnac}   D. Benson and G. Gnacadja, Phantom maps and purity in modular
representation theory I, Fund. Math., 161 (1999), 27-91.

\bibitem{Bridge}   P. Bridge, Essentially Algebraic Theories and Localizations in Toposes and Abelian Categories, Doctoral Thesis, University of Manchester, 2012, {\em available at} http://www.maths.manchester.ac.uk/$\sim$mprest/publications.

\bibitem{BurThes}  K. Burke, Some Model-Theoretic Properties of Functor
Categories for Modules, Doctoral Thesis, University of Manchester, 1994.

\bibitem{BurPre1}   K. Burke and M. Prest, Rings of definable scalars and
biendomorphism rings, pp. 188-201 {\em in} Model Theory of Groups and
Automorphism Groups, London Math. Soc. Lect. Note Ser., Vol. 244,
Cambridge University Press, 1997.

\bibitem{CrivPreRey}   S. Crivei, M. Prest and G. Reynders, Model theory of comodules,
J. Symbolic Logic, 69(1) (2004), 137-142.

\bibitem{FreydLJ}   P. Freyd, Representations in abelian categories, pp. 95-120 {\em
in} Proceedings Conf. Categorical Algebra, La Jolla, 1965,
Springer-Verlag, 1966.

\bibitem{GarkPre1}   G. Garkusha and M. Prest,  Injective objects in triangulated
categories, preprint, J. Algebra Appl., 3(4) (2004), 367-389.

\bibitem{GarkPre2}   G. Garkusha and M. Prest,  Triangulated categories and the
 Ziegler spectrum, Algebras and Representation Theory, 8(4) (2005), 499-523.

\bibitem{EGA}   A. Grothendieck and J.A. Dieudonn\'{e}, El\'{e}ments de G\'{e}om\'{e}trie Alg\'{e}brique, I, Springer-Verlag, Berlin, Heidelberg, New York, 1971.

\bibitem{HerzDual}   I. Herzog,  Elementary duality of modules,  Trans. Amer.
Math. Soc., 340 (1993), 37-69.

\bibitem{HMS} A. Huber and S. M\"{u}ller-Stach, Periods and Nori Motives, Ergeb. Math. Grenzgeb., Ser 3, Vol. 65, Springer, 2017.

\bibitem{Kleiman} S. Kleiman,  Motives, pp. 53–82 {\em in} Algebraic geometry, Oslo 1970 (Proc. Fifth Nordic Summer- School in Math., Oslo, 1970), Wolters-Noordhoff, Groningen, 1972.

\bibitem{KraEx} H. Krause, Exactly definable categories, J. Algebra, 201 (1998), 456-492.

\bibitem{KraTel}   H. Krause, Smashing subcatgories and the telescope conjecture -
an algebraic approach, Invent. Math., 139(1) (2000), 99-133.

\bibitem{KP1}  T. G. Kucera and M. Prest, Imaginary modules, J. Symbolic
logic, 57(2) (1992), 698-723.

\bibitem{Milne}   J. S. Milne, Motives - Grothendieck's dream, 2012 {\em available at} http://www.jmilne.org/math/xnotes/MOT.pdf

\bibitem{Mit}  B. Mitchell: Rings with several objects, Adv. in Math., 8 (1972), 1-161.

\bibitem{Nori}  M. Nori, Lectures on mixed motives, TIFR, {\em notes by N. Fakhruddin}, 2000.

\bibitem{Perera}   S. Perera,  Grothendieck Rings of Theories of Modules, Doctoral Thesis, University of Manchester, 2011, {\em available at} http://www.maths.manchester.ac.uk/$\sim$mprest/publications.

\bibitem{PreBk}   M. Prest,  Model Theory and Modules,  London Math. Soc.
Lecture Notes Ser. Vol. 130, Cambridge University Press, 1988.

\bibitem{PreNBK}   M. Prest,  Purity, Spectra and Localisation, Encyclopedia of Mathematics and its Applications, Vol. 121, Cambridge University Press, 2009.

\bibitem{PreMakVol} M. Prest: Model theory in additive categories, in {\it Models, logics, and higher-dimensional categories}, 231-244, CRM Proc. Lecture Notes, Vol. 53, Amer. Math. Soc., Providence, RI, 2011.

\bibitem{PreMAMS} M. Prest, Definable additive categories: purity and
model theory, Mem. Amer. Math. Soc., Vol.~210/No.~987, 2011.

\bibitem{PreADC}  M. Prest, Abelian categories and definable additive categories, University of Manchester, {\it preprint}, 2012, arXiv:1202.0426.

\bibitem{PreAxtFlat}   M. Prest, Categories of imaginaries for definable additive categories, {\it preprint}, University of Manchester, 2012, arXiv:1202.0427.

\bibitem{PreExact}   M. Prest,  Modules as exact functors, Proceedings of Auslander Distinguished Lectures Conference, Woods Hole, 2016, {\em to appear}, arXiv:1801.08015.

\bibitem{PRPya}  M. Prest, V. Puninskaya and A. Ralph,  Some model theory of
sheaves of modules, J. Symbolic Logic, 69(4) (2004), 1187-1199.

\bibitem{ReyThes}  G. Reynders, Ziegler Spectra over Serial Rings and Coalgebras,
Doctoral Thesis, University of Manchester, 1998, {\em available at} http://www.maths.manchester.ac.uk/$\sim$mprest/publications.

\bibitem{Zie}   M. Ziegler,  Model theory of modules,  Ann. Pure Appl.
Logic, 26(2) (1984), 149-213.

\end{thebibliography}
\end{document}